\documentclass[preprint,12pt,authoryear]{elsarticle}

\usepackage[T1]{fontenc}
\usepackage{amsmath,amssymb,amsthm,mathtools}
\usepackage{bm}
\usepackage{mathrsfs}
\usepackage{xfrac}
\usepackage{extarrows}
\usepackage{graphicx}
\usepackage{booktabs}
\usepackage{enumerate}
\usepackage{enumitem}
\usepackage{xcolor}

\usepackage{natbib}
\setcitestyle{numbers,square}
\usepackage{hypernat}
\usepackage[hidelinks]{hyperref}
\hypersetup{
  colorlinks=true,
  citecolor=cyan,
  linkcolor=cyan,
  urlcolor=cyan
}

\everymath{\displaystyle}
\numberwithin{equation}{section}

\theoremstyle{plain}
\newtheorem{theorem}{Theorem}[section]
\newtheorem{proposition}[theorem]{Proposition}
\newtheorem{lemma}[theorem]{Lemma}
\newtheorem{corollary}[theorem]{Corollary}

\theoremstyle{definition}
\newtheorem{definition}[theorem]{Definition}

\newtheoremstyle{myremark}
  {6pt}
  {6pt}
  {\normalfont}
  {}
  {\bfseries}
  {.}
  {0.5em}
  {}

\theoremstyle{myremark}
\newtheorem{remark}[theorem]{Remark}

\journal{\textcolor{red}{}}

\begin{document}

	\begin{frontmatter}
	
		\title{Hilbertian Hardy--Sobolev Spaces on Tube Domains over Convex Cones}
		
		\author[author1]{Haichou Li}
        \author[author2]{Tao Qian}

		\address[author1]{College of Mathematics and Informatics, South China Agricultural University, Guangzhou, China. Email: hcl2016@scau.edu.cn}
		\address[author2]{Corresponding author. Faculty of Innovation Engineering, Macau University of Science and Technology, Macau, China. Email: tqian@must.edu.mo} 


\begin{abstract}

We introduce Hilbertian Hardy--Sobolev spaces on tube domains over convex cones
and develop their structural theory from a Fourier-analytic point of view.
We first establish a Paley--Wiener type representation, which identifies these
spaces with weighted $L^2$ spaces on the dual cone and reveals their intrinsic
Fourier structure.
This representation leads naturally to a Hardy--Sobolev decomposition theorem
for boundary Sobolev spaces on $\mathbb{R}^d$.
Building on these structural results, we derive explicit reproducing kernels
and characterize Carleson measures for the Hilbertian Hardy--Sobolev spaces.
As a preliminary operator-theoretic application, we also derive
basic consequences for multipliers and weighted composition operators on
these spaces.
\end{abstract}
   
\begin{keyword} Hardy-Sobolev spaces \sep Tube domains \sep Paley--Wiener theorem \sep Reproducing kernels \sep Carleson measures   \sep Hardy--Sobolev decomposition \sep  operators \end{keyword}

	\end{frontmatter}

\tableofcontents
\section{Introduction}

Hardy--Sobolev spaces form a natural class of analytic function spaces
that combine the boundary control of Hardy spaces with the regularity
features measured by Sobolev norms. Roughly speaking, they consist of
holomorphic functions whose boundary behaviour is controlled in the
Hardy sense, while their smoothness is measured through Sobolev-type
conditions. Such spaces arise naturally at the intersection of complex
analysis, harmonic analysis, and operator theory, and may be viewed as
regularity refinements of classical Hardy spaces.

From a general function-space perspective, the Sobolev component of
Hardy--Sobolev spaces is closely related to derivatives, Bessel
potentials, and Fourier weights. In particular, Sobolev regularity may
be characterized through polynomial weights in the Fourier variable,
reflecting the basic principle that the Fourier transform converts
differentiation into multiplication. Standard references for this
Fourier characterization of Sobolev spaces include
Stein~\cite{Stein1970}, Adams and Fournier~\cite{AdamsFournier2003},
and Triebel~\cite{Triebel1983,Triebel1978}.

Hardy--Sobolev spaces admit several natural formulations in the
literature. A common definition is derivative-based, requiring that a
holomorphic function $F$ satisfy
\[
\partial^\alpha F\in H^2(D),
\quad |\alpha|\le n,
\]
where $D$ denotes the underlying domain. Such derivative-type
formulations have been studied extensively in various settings; see, for
instance,
\cite{CascanteOrtega1995,OrtegaFabrega2006,HeCaoZhu2015,LiuLiKouQu2025}.
Other approaches describe Hardy--Sobolev spaces through boundary
Sobolev regularity or through Fourier-analytic conditions involving
weighted Fourier transforms.

On classical bounded domains such as the unit disk and the unit ball in
$\mathbb C^n$, Hardy--Sobolev spaces have by now been studied rather
systematically. Major themes of the theory include Carleson-type
embeddings, multiplier algebras, spectral questions, and composition
operators. Atomic decompositions were established by Cho and
Kim~\cite{ChoKim2006}, while operator-theoretic questions were studied
in a number of works including
\cite{CaoHe2014,CaoHeZhu2018,HeCaoZhu2015}.
These results show that Hardy--Sobolev spaces support a substantial
function theory and operator theory in the bounded-domain setting.

Compared with bounded domains, the theory on unbounded domains remains
much less developed. Even in one complex variable, systematic studies of
Hardy--Sobolev spaces on the upper half-plane have emerged only
relatively recently. In the Hilbertian setting,
Gal\'e et al.~\cite{GaleMatacheMiana2020}
introduced Hardy--Sobolev spaces on the half-plane via an extended
Paley--Wiener theorem and derived explicit reproducing kernels.
Further developments on Hardy--Sobolev spaces and related operator
theory can also be found in the one-dimensional setting. More precisely,
\cite{LiuLiKouQu2025} studied the upper half-plane in the Hilbert case
$p=2$ by using a weighted derivative-type definition, whereas
\cite{HaoxianFirstPaper} treated the full range $1\le p\le\infty$ from a
derivative-based viewpoint. These works provide useful one-dimensional
background for the present study, while the high-dimensional tube-domain
setting considered here requires a substantially different framework
adapted to the cone geometry and the Fourier--Laplace representation.

In several complex variables, Hardy--Sobolev spaces have also been
studied on domains whose geometry plays an essential role.
Rotkevich obtained constructive descriptions of Hardy--Sobolev spaces
on strongly convex domains and strictly pseudoconvex domains
\cite{Rotkevich2018,Rotkevich2022}.
Another important higher-dimensional unbounded setting is the Siegel
upper half-space, where Arcozzi et al.\ proved
Paley--Wiener type theorems for weighted Dirichlet-type spaces
\cite{ArcozziMonguzziPelosoSalvatori2019}, and Chalmoukis and Lamberti
established Carleson measure criteria for Hardy--Sobolev spaces in this
framework~\cite{ChalmoukisLamberti2025}.
These developments indicate that Hardy--Sobolev phenomena on unbounded
domains are closely tied to the geometry of the ambient domain.

The present paper is concerned with tube domains over convex cones.
Let $\Omega\subset\mathbb R^d$ be an open convex cone in the
$d$-dimensional Euclidean space $\mathbb R^d$. Consider the
associated tube domain
\[
T_\Omega=\mathbb R^d+i\Omega .
\]
Such domains arise naturally in Fourier analysis and several complex
variables and provide a natural setting for analytic function spaces
adapted to cone geometry. Hardy spaces on tube domains admit a
well-developed Fourier--Laplace description in terms of spectral
support in the dual cone $\Omega^\ast$; see, for instance,
Garrig\'os~\cite{Garrigos2001},
Li, Deng and Qian~\cite{LiDengQian2018,DengLiQian2019}.
This spectral characterization suggests a natural way to combine Hardy
structure with Sobolev regularity.

Related developments in Hardy--Sobolev analysis also appear in the
literature on decomposition, derivative-related questions and signal
analysis; see, for example,
\cite{DangQianYou2011,DangQianYang2012}.
More recent work closer to the present function-space and
operator-theoretic viewpoint includes
\cite{LiuLiKouQu2025,HaoxianFirstPaper}.
These works indicate that Hardy--Sobolev spaces admit a rich structure
from both function-theoretic and applied perspectives, although the
corresponding theory on high-dimensional tube domains remains far from
complete.

At the Hardy-space level, related Fourier spectrum characterizations
and decomposition results have been established in several settings.
For Hardy spaces on tubes over cones, Fourier spectrum
characterizations were obtained for $1\le p\le\infty$ and later
extended to the range $0<p<1$; see
\cite{LiDengQian2018,DengLiQian2019}.
On the other hand, decomposition results were proved for
$L^p(\partial\mathbb D)$, $0<p\le 1$, on the unit circle and for
$L^p(\mathbb R^n)$, $0<p<1$, in higher dimensions via Hardy spaces on
tubes; see \cite{LiDengQian2016,DengLiQianCVEE2019}.
Although these works concern Hardy spaces rather than Hardy--Sobolev
spaces, they provide a natural analytic background for the
Fourier--Laplace viewpoint adopted here and for the decomposition
result established later in the paper.

In the present tube-domain setting, this Hardy structure is naturally
reflected in the requirement that the Fourier spectrum be supported in
the dual cone $\Omega^\ast$. On the other hand, classical Sobolev regularity on
$\mathbb R^d$ is described in Fourier terms by polynomial weights in
the frequency variable. It is therefore natural, in the present
geometric framework, to define a Hardy--Sobolev space by combining
these two features: one keeps the spectral support condition
characteristic of Hardy spaces on tube domains and imposes, in
addition, a Sobolev-type weighted $L^2$ condition on the corresponding
Fourier transform. In this sense, the definition adopted in the present
paper may be viewed as a Fourier--Laplace realization of
Hardy--Sobolev regularity adapted to the geometry of convex-cone tube
domains.

Motivated by this observation, in the present paper we introduce a
Hilbertian Hardy--Sobolev space on $T_\Omega$ defined through a
Fourier--Laplace representation. More precisely, functions in the
Hilbertian Hardy--Sobolev space considered here admit representations
of the form
\[
F(z)=\int_{\Omega^\ast} e^{i\langle z,\xi\rangle} f(\xi)\,d\xi ,
\]
where the spectral function $f$ satisfies the weighted condition
\[
\int_{\Omega^\ast}|f(\xi)|^2(1+\rho(\xi)^2)^n\,d\xi<\infty .
\]
In this formulation, the Hardy aspect is reflected in the spectral
support condition in $\Omega^\ast$, while the Sobolev regularity is
measured through polynomial weights in the Fourier variable. The
precise definition of the Hilbertian Hardy--Sobolev spaces studied in
this paper will be given in Section~3.

The purpose of the present paper is to develop a structural theory of
Hilbertian Hardy--Sobolev spaces on tube domains over convex cones.
Our analysis combines tools from Fourier analysis, several complex
variables, convex geometry, and operator theory.
We first introduce the Hilbertian Hardy--Sobolev spaces and establish
their basic Hilbert space structure. We then prove a Paley--Wiener type
characterization describing the spaces in terms of weighted Fourier
transforms supported in the dual cone.

Next we establish a Hardy--Sobolev decomposition theorem showing that
boundary Sobolev functions with spectrum supported in
$\Omega^\ast\cup(-\Omega^\ast)$ split into Hardy--Sobolev components
associated with the opposite tube domains $T_\Omega$ and $T_{-\Omega}$.
We also derive explicit reproducing kernel formulas for the spaces and
use them to characterize Carleson measures through kernel testing
conditions. Finally, we derive several preliminary operator-theoretic
consequences of the structural results developed in the paper,
including basic facts on multipliers and weighted composition operators
on the Hilbertian Hardy--Sobolev spaces.

These results obtained in the perent paper are not isolated, 
but form a coherent progression: the Paley--Wiener characterization provides the structural foundation for
the Hardy--Sobolev decomposition, the reproducing-kernel theory and the
Carleson-measure characterization, while the latter in turn lead
naturally to the preliminary operator-theoretic consequences discussed
at the end of the paper.

\subsection*{Organization of the paper}

The paper is organized as follows.
Section~2 recalls the necessary background on convex cones,
tube domains,  Hardy spaces on tubes and Sobolev spaces on $\mathbb{R}^d$.
In Section~3 we introduce the Hilbertian Hardy--Sobolev spaces that
form the main object of study.
Section~4 establishes a Paley--Wiener type characterization.
Section~5 is devoted to a Hardy--Sobolev decomposition theorem.
In Section~6 we derive reproducing kernel formulas.
Section~7 characterizes Carleson measures.
Finally, Section~8 derives several preliminary operator-theoretic
consequences concerning multipliers and weighted composition operators
on the Hardy--Sobolev spaces.

\section{Preliminaries}

In this section we recall several geometric and Fourier--analytic notions
related to tube domains over convex cones that will be used throughout
the paper. Standard background on convex cones, tube domains, and
Fourier analysis in this setting may be found, for instance, in
\cite{FarautKoranyi1994,SteinWeiss1971,LiDengQian2018}.
Some additional notions concerning Sobolev spaces are also recalled
for later use.

\subsection{Convex cones and tube domains}

Let $\Omega\subset\mathbb{R}^d$ be an open convex cone. Throughout the
paper we assume that $\Omega$ is nonempty and does not contain any
complete straight line.

The \emph{dual cone} of $\Omega$ is defined by
\[
\Omega^\ast
=
\{\xi\in\mathbb{R}^d:\langle x,\xi\rangle \ge 0
\ \text{for all }x\in\Omega\}.
\]
Thus $\Omega^\ast$ is a closed convex cone in $\mathbb{R}^d$.

Associated with $\Omega$ we consider the tube domain
\[
T_\Omega
=
\mathbb{R}^d+i\Omega
=
\{z=x+iy:\;x\in\mathbb{R}^d,\;y\in\Omega\}
\subset\mathbb{C}^d ,
\]
where $\mathbb{C}^d$ denotes the $d$-dimensional complex Euclidean space.

Similarly we denote
\[
T_{-\Omega}
=
\mathbb{R}^d-i\Omega .
\]

For $y\in\Omega$, the horizontal slice
\[
\mathbb{R}^d+iy
\]
may be identified with $\mathbb{R}^d$ through the real coordinate
$x\mapsto x+iy$. This identification will be used when discussing
boundary values of analytic functions on $T_\Omega$.

\subsection{Hardy spaces on tube domains}

We briefly recall the Hardy space on $T_\Omega$ in the Hilbertian case.

Let $F$ be holomorphic on $T_\Omega$. We say that $F$ belongs to the
Hardy space $H^2(T_\Omega)$ if
\[
\sup_{y\in\Omega}
\int_{\mathbb{R}^d}
|F(x+iy)|^2\,dx
<\infty .
\]
The quantity
\[
\|F\|_{H^2(T_\Omega)}
=
\sup_{y\in\Omega}
\left(
\int_{\mathbb{R}^d}|F(x+iy)|^2dx
\right)^{1/2}
\]
defines a norm under which $H^2(T_\Omega)$ becomes a Hilbert space.

A fundamental feature of Hardy spaces on tube domains is that they
admit Fourier representations supported in the dual cone.
More precisely, if $F\in H^2(T_\Omega)$, then there exists a function
$\widehat f\in L^2(\Omega^\ast)$ such that
\[
F(z)
=
\int_{\Omega^\ast}
e^{i\langle z,\xi\rangle}\widehat f(\xi)\,d\xi,
\qquad z\in T_\Omega,
\]
and
\[
\|F\|_{H^2(T_\Omega)}^2
=
\int_{\Omega^\ast}
|\widehat f(\xi)|^2\,d\xi .
\]
Conversely, every function defined by the above integral with
$\widehat f\in L^2(\Omega^\ast)$ belongs to $H^2(T_\Omega)$.
Such Fourier--Laplace representations play a central role in the
analysis of Hardy spaces on tube domains; see, for instance,
\cite{Garrigos2001,LiDengQian2018,DengLiQian2019,SteinWeiss1971}.

\subsection{Fourier transform and Fourier--Laplace representations}

For $f\in L^1(\mathbb{R}^d)$ we define the Fourier transform by
\[
\widehat f(\xi)
=
\int_{\mathbb{R}^d}
f(x)e^{-i\langle x,\xi\rangle}\,dx,
\qquad \xi\in\mathbb{R}^d.
\]
This definition extends in the usual way to $L^2(\mathbb{R}^d)$, where
the Fourier transform becomes a unitary operator on $L^2(\mathbb{R}^d)$;
see \cite{SteinWeiss1971}.

If a function $\widehat f$ is supported in the dual cone $\Omega^\ast$,
the associated Fourier integral naturally extends to a holomorphic
function on $T_\Omega$ through the formula
\[
F(z)
=
\int_{\Omega^\ast}
e^{i\langle z,\xi\rangle}\widehat f(\xi)\,d\xi,
\qquad z\in T_\Omega.
\]
This integral representation is usually referred to as the
\emph{Fourier--Laplace transform} associated with the cone $\Omega$.
It provides the analytic continuation of boundary Fourier data into
the tube domain and underlies Paley--Wiener type characterizations of
Hardy spaces on tubes.

\subsection{Sobolev spaces on Euclidean space}

We briefly recall the classical Sobolev spaces on $\mathbb{R}^d$.
For $n\in\mathbb{N}$ the Sobolev space $W^n_2(\mathbb{R}^d)$ consists
of all functions $u\in L^2(\mathbb{R}^d)$ whose weak derivatives
$\partial^\alpha u$ of order $|\alpha|\le n$ belong to $L^2(\mathbb{R}^d)$.
It is equipped with the norm
\[
\|u\|_{W^n_2(\mathbb{R}^d)}^2
=
\sum_{|\alpha|\le n}
\|\partial^\alpha u\|_{L^2(\mathbb{R}^d)}^2 .
\]

In the Fourier domain, this norm can be expressed through polynomial
weights. More precisely, one has
\[
\|u\|_{W^n_2(\mathbb{R}^d)}^2
\asymp
\int_{\mathbb{R}^d}
|\widehat u(\xi)|^2
(1+|\xi|^2)^n
\,d\xi ,
\]
where $\widehat u$ denotes the Fourier transform of $u$.
For further background on Sobolev spaces we refer to
\cite{AdamsFournier2003,Mazya2011,Triebel1983}.

To formulate the Hardy--Sobolev space studied in this paper, we shall
later fix a cone-adapted positive homogeneous gauge on the dual cone
$\Omega^\ast$ and use it to define weighted Fourier-side norms.

\section{Hilbertian Hardy--Sobolev Spaces on Tube}

In this section we introduce the weighted Hilbertian definition of
Hardy--Sobolev spaces on tube domains over convex cones that will be
studied throughout the paper.
As discussed in the Introduction, Hardy--Sobolev spaces admit several
natural definitions in the literature.
For the purposes of the present paper, we adopt a weighted Hilbertian
definition on the Fourier--Laplace side.
This definition is adapted to the geometry of the dual cone and is
particularly well suited to the Paley--Wiener analysis developed in the
next section.

\subsection{A weighted Fourier-side definition}

To define the Hilbertian Hardy--Sobolev space on $T_\Omega$ in a way
adapted to the cone geometry, we introduce a weighted Fourier-side
framework on the dual cone $\Omega^\ast$.

Throughout the paper we fix a continuous positive homogeneous function
\[
\rho:\Omega^\ast\to(0,\infty)
\]
of degree one, that is,
\[
\rho(t\xi)=t\rho(\xi),
\qquad t>0,\ \xi\in\Omega^\ast.
\]
This function plays the role of a cone-adapted frequency gauge on
$\Omega^\ast$.

For $n\in\mathbb N$, we define the associated weight
\[
w_n(\xi)=\sum_{k=0}^n \rho(\xi)^{2k},
\qquad \xi\in\Omega^\ast.
\]

The precise choice of $\rho$ is not essential at the level of the present
development; what matters is positivity, continuity, and homogeneity of
degree one. Different admissible choices of $\rho$ lead to equivalent
weighted descriptions whenever the corresponding gauges are comparable.
In this paper, however, one such admissible gauge is fixed once and for all.

\begin{remark}
Since $\rho(\xi)>0$ on $\Omega^\ast$, we have
\[
w_n(\xi)\asymp (1+\rho(\xi)^2)^n,
\qquad \xi\in\Omega^\ast.
\]
Thus either expression may be used at the level of equivalent norms.
We work with $w_n$ in the form $\sum_{k=0}^n \rho(\xi)^{2k}$ because this
notation is convenient for later estimates involving different orders.
\end{remark}

For measurable functions $f$ on $\Omega^\ast$, we write
\[
\|f\|_{L^2(\Omega^\ast,w_n)}^2
=
\int_{\Omega^\ast}|f(\xi)|^2w_n(\xi)\,d\xi.
\]

Motivated by the Fourier--Laplace description of Hardy spaces on tube
domains and by the Sobolev principle of measuring smoothness through
weighted Fourier integrability, we introduce the following Hilbertian
Hardy--Sobolev space. The definition combines the Hardy-space spectral
support condition in the dual cone $\Omega^\ast$ with the Sobolev-type
weight $w_n$ on the Fourier side.

\begin{definition}
Let $n\in\mathbb{N}$.
The \emph{Hilbertian Hardy--Sobolev space} $H^n_{2,\rho}(T_\Omega)$ consists
of all holomorphic functions $F$ on $T_\Omega$ of the form
\[
F(z)=\int_{\Omega^\ast} e^{i\langle z,\xi\rangle}f(\xi)\,d\xi,
\qquad z\in T_\Omega,
\]
for some $f\in L^2(\Omega^\ast,w_n)$.
The norm on $H^n_{2,\rho}(T_\Omega)$ is defined by
\[
\|F\|_{H^n_{2,\rho}(T_\Omega)}
=
\inf\bigl\{\|f\|_{L^2(\Omega^\ast,w_n)}:F=\mathcal L f\bigr\},
\]
where
\[
\mathcal L f(z)=\int_{\Omega^\ast} e^{i\langle z,\xi\rangle}f(\xi)\,d\xi
\]
denotes the Fourier--Laplace transform.
\end{definition}

The usefulness of this definition will be justified in the next section,
where we show that it admits an intrinsic Paley--Wiener
characterization in terms of boundary translates and weighted spectral
densities. This characterization will provide the foundation for the
subsequent decomposition, reproducing-kernel and Carleson-measure
results.

\begin{remark}
When $d=1$ and $\Omega=(0,\infty)$, we may choose $\rho(\xi)=\xi$ on
$\Omega^\ast=(0,\infty)$. Then the present definition reduces to the
Fourier--Laplace image of the weighted space
\[
L^2\!\left((0,\infty),\sum_{k=0}^n \xi^{2k}\,d\xi\right).
\]
Hence, in one dimension, our space is a weighted Hilbertian
Hardy--Sobolev space on the upper half-plane. It is therefore closely
related to the weighted Hilbertian theory studied in
\cite{LiuLiKouQu2025}. We do not pursue here the precise equivalence
between the two formulations.
\end{remark}

\subsection{Well-definedness and Hilbert structure}

We next verify that the Fourier--Laplace definition above is well posed.
More precisely, we show that the resulting space is exactly the image of
the weighted space $L^2(\Omega^\ast,w_n)$ under the Fourier--Laplace
transform, and that it inherits from that space a natural Hilbert
structure.

\begin{theorem}\label{thm:HS-Fourier-Hilbert}
For every $n\in\mathbb{N}$, the Fourier--Laplace transform
\[
\mathcal L:L^2(\Omega^\ast,w_n)\to H(T_\Omega)
\]
is injective, and its range is precisely $H^n_{2,\rho}(T_\Omega)$.
Consequently, the formula
\[
\langle \mathcal L f,\mathcal L g\rangle_{H^n_{2,\rho}(T_\Omega)}
:=
\int_{\Omega^\ast} f(\xi)\overline{g(\xi)}\,w_n(\xi)\,d\xi
\]
defines an inner product on $H^n_{2,\rho}(T_\Omega)$, with respect to
which $H^n_{2,\rho}(T_\Omega)$ is a Hilbert space.
In particular, $\mathcal L$ is an isometric isomorphism from
$L^2(\Omega^\ast,w_n)$ onto $H^n_{2,\rho}(T_\Omega)$.
\end{theorem}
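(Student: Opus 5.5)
Since $w_n\ge 1$, we have $L^2(\Omega^\ast,w_n)\subset L^2(\Omega^\ast)$, and the plan is to reduce everything to the Hardy-space theory recalled in Section~2. The first step is to check that $\mathcal L f$ is a well-defined holomorphic function on $T_\Omega$ for every $f\in L^2(\Omega^\ast,w_n)$. Fix $z=x+iy$ with $y\in\Omega$. Then $|e^{i\langle z,\xi\rangle}|=e^{-\langle y,\xi\rangle}$. Because $\Omega$ is open, there is $c_y>0$ with $\langle y,\xi\rangle\ge c_y|\xi|$ for all $\xi\in\Omega^\ast$. This follows from compactness of $\Omega^\ast\cap S^{d-1}$ together with the fact that $\langle y,\xi\rangle>0$ for $\xi\in\Omega^\ast\setminus\{0\}$, which holds since $y$ is interior. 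Cauchy--Schwarz then gives absolute convergence of the integral, locally uniformly in $z$. Morera's theorem, or differentiation under the integral sign, yields $\mathcal L f\in H(T_\Omega)$. By the Hardy-space Paley--Wiener theorem recalled in Section~2, $\mathcal L f\in H^2(T_\Omega)$ with $\|\mathcal L f\|_{H^2}=\|f\|_{L^2(\Omega^\ast)}$.

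Injectivity follows at once. If $\mathcal L f=0$, then $\|f\|_{L^2(\Omega^\ast)}=\|\mathcal Lf\|_{H^2}=0$, so $f=0$ a.e. Alternatively, one can argue directly: for fixed $y\in\Omega$, the function $x\mapsto \mathcal Lf(x+iy)$ is the (inverse) Fourier transform of $e^{-\langle y,\cdot\rangle}f\,\chi_{\Omega^\ast}\in L^1\cap L^2(\mathbb R^d)$. Plancherel's theorem then forces $e^{-\langle y,\xi\rangle}f(\xi)=0$ a.e., hence $f=0$. The range statement is a tautology from the definition: $H^n_{2,\rho}(T_\Omega)$ is by definition the set of $\mathcal L f$ with $f\in L^2(\Omega^\ast,w_n)$. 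Injectivity shows that the representing $f$ is unique, so the infimum in the definition of the norm is attained by that single $f$, and $\|\mathcal Lf\|_{H^n_{2,\rho}}=\|f\|_{L^2(\Omega^\ast,w_n)}$.

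Finally, $L^2(\Omega^\ast,w_n)$ is a Hilbert space, being an $L^2$ space for the measure $w_n\,d\xi$ (with $w_n$ continuous and positive). Since $\mathcal L$ is a linear bijection onto $H^n_{2,\rho}(T_\Omega)$, the transported form $\langle \mathcal Lf,\mathcal Lg\rangle:=\int f\bar g\,w_n$ is well defined, because each $F$ has a unique preimage. It is sesquilinear and positive definite, and its induced norm coincides with the infimum norm. Completeness transfers through the isometric bijection. The main point requiring care is the well-definedness and injectivity step, specifically the cone estimate $\langle y,\xi\rangle\ge c_y|\xi|$. Once injectivity is in place, the remaining assertions are formal.
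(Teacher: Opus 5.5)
Your proposal is correct and follows essentially the same route as the paper: the cone estimate $\langle y,\xi\rangle\ge c_y|\xi|$ with Cauchy--Schwarz for absolute convergence and holomorphy, the range as a tautology, injectivity via Fourier uniqueness for $e^{-\langle y,\cdot\rangle}f\chi_{\Omega^\ast}$ on a fixed slice (your ``alternative'' argument is exactly the paper's), and the Hilbert structure transported through the bijection. Your primary injectivity argument via the $H^2$ Paley--Wiener isometry is also valid, up to a normalization constant that depends on the Fourier convention. You also state explicitly that the infimum norm is attained by the unique preimage, which the paper leaves implicit.
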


Thus $H^n_{2,\rho}(T_\Omega)$ may be regarded as the holomorphic
realization of the weighted Hilbert space $L^2(\Omega^\ast,w_n)$ under
the Fourier--Laplace transform.

\begin{proof}
Let $f\in L^2(\Omega^\ast,w_n)$. Since
\[
w_n(\xi)=\sum_{k=0}^n \rho(\xi)^{2k}\ge 1,
\]
we have $f\in L^2(\Omega^\ast)$.

Fix $z=x+iy\in T_\Omega$. Since $y\in\Omega$, there exists a constant
$c_y>0$ such that
\[
\langle y,\xi\rangle \ge c_y |\xi|,
\qquad \xi\in \Omega^\ast.
\]
Hence, by Cauchy--Schwarz,
\[
\int_{\Omega^\ast}\bigl|e^{i\langle z,\xi\rangle}f(\xi)\bigr|\,d\xi
=
\int_{\Omega^\ast} e^{-\langle y,\xi\rangle}|f(\xi)|\,d\xi
\]
\[
\le
\left(\int_{\Omega^\ast}|f(\xi)|^2w_n(\xi)\,d\xi\right)^{1/2}
\left(\int_{\Omega^\ast}e^{-2\langle y,\xi\rangle}w_n(\xi)^{-1}\,d\xi\right)^{1/2}
\]
and
\[
\int_{\Omega^\ast}e^{-2\langle y,\xi\rangle}w_n(\xi)^{-1}\,d\xi
\le
\int_{\Omega^\ast}e^{-2c_y|\xi|}\,d\xi<\infty.
\]
Therefore the integral defining $\mathcal Lf(z)$ converges absolutely for
every $z\in T_\Omega$.

We next show that $\mathcal Lf$ is holomorphic on $T_\Omega$. Let
$K\subset T_\Omega$ be compact. Since $\Im z\in\Omega$ for all $z\in K$,
there exists $c_K>0$ such that
\[
\langle \Im z,\xi\rangle \ge c_K|\xi|,
\qquad z\in K,\ \xi\in\Omega^\ast.
\]
Thus, for each $j=1,\dots,d$,
\[
\bigl|\xi_j e^{i\langle z,\xi\rangle}f(\xi)\bigr|
\le
|\xi| e^{-c_K|\xi|}|f(\xi)|.
\]
By Cauchy--Schwarz,
\[
\int_{\Omega^\ast} |\xi| e^{-c_K|\xi|}|f(\xi)|\,d\xi
\le
\|f\|_{L^2(\Omega^\ast,w_n)}
\left(\int_{\Omega^\ast} |\xi|^2 e^{-2c_K|\xi|}w_n(\xi)^{-1}\,d\xi\right)^{1/2}
<\infty.
\]
Hence differentiation under the integral sign is justified on $K$, and
\[
\frac{\partial}{\partial z_j}\mathcal Lf(z)
=
i\int_{\Omega^\ast}\xi_j e^{i\langle z,\xi\rangle}f(\xi)\,d\xi,
\qquad z\in T_\Omega.
\]
Therefore $\mathcal Lf$ is holomorphic on $T_\Omega$, so
\[
\mathcal L\bigl(L^2(\Omega^\ast,w_n)\bigr)\subset H(T_\Omega).
\]

By the definition of $H^n_{2,\rho}(T_\Omega)$, the range of $\mathcal L$
is precisely $H^n_{2,\rho}(T_\Omega)$.

To prove injectivity, suppose that $\mathcal Lf=0$ on $T_\Omega$. Fix
$y\in\Omega$ and define
\[
g_y(\xi):=e^{-\langle y,\xi\rangle}f(\xi)\mathbf \chi_{\Omega^\ast}(\xi).
\]
Then $g_y\in L^1(\mathbb R^d)$ by the above estimate, and for every
$x\in\mathbb R^d$,
\[
\widehat{g_y}(-x)=\mathcal Lf(x+iy)=0.
\]
By the uniqueness theorem for the Fourier transform, $g_y=0$ almost
everywhere on $\mathbb R^d$. Since $e^{-\langle y,\xi\rangle}>0$ on
$\Omega^\ast$, it follows that $f=0$ almost everywhere on $\Omega^\ast$.
Thus $\mathcal L$ is injective.

Finally, define
\[
\langle \mathcal Lf,\mathcal Lg\rangle_{H^n_{2,\rho}(T_\Omega)}
:=
\int_{\Omega^\ast} f(\xi)\overline{g(\xi)}\,w_n(\xi)\,d\xi.
\]
This is well defined because $\mathcal L$ is injective. By construction,
$\mathcal L$ is an isometry from $L^2(\Omega^\ast,w_n)$ onto
$H^n_{2,\rho}(T_\Omega)$, and since $L^2(\Omega^\ast,w_n)$ is complete,
$H^n_{2,\rho}(T_\Omega)$ is a Hilbert space.
\end{proof}

At this stage the space is defined on the Fourier--Laplace side.
Its intrinsic analytic interpretation is still hidden.
The next section will show that this definition admits a Paley--Wiener
type characterization in terms of boundary translates and spectral
support in the dual cone.

\subsection{Basic analytic properties}

We  next establish several basic analytic properties of
$H^n_{2,\rho}(T_\Omega)$ that follow directly from its Fourier--Laplace
realization as a weighted Hilbert space.

\begin{proposition}\label{prop:HS-point-evaluation}
For each $z\in T_\Omega$, the point evaluation functional
\[
\delta_z:F\mapsto F(z)
\]
is bounded on $H^n_{2,\rho}(T_\Omega)$.
Consequently, $H^n_{2,\rho}(T_\Omega)$ is a reproducing kernel Hilbert
space.
\end{proposition}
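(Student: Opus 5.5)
The plan is to work entirely on the Fourier side, using Theorem~\ref{thm:HS-Fourier-Hilbert}. By that theorem every $F\in H^n_{2,\rho}(T_\Omega)$ is $F=\mathcal Lf$ for a unique $f\in L^2(\Omega^\ast,w_n)$, and $\|F\|_{H^n_{2,\rho}(T_\Omega)}=\|f\|_{L^2(\Omega^\ast,w_n)}$. Fix $z=x+iy\in T_\Omega$. As in the proof of Theorem~\ref{thm:HS-Fourier-Hilbert}, choose $c_y>0$ with $\langle y,\xi\rangle\ge c_y|\xi|$ for $\xi\in\Omega^\ast$. Cauchy--Schwarz then gives
\[
|F(z)|\le\int_{\Omega^\ast}e^{-\langle y,\xi\rangle}|f(\xi)|\,d\xi
\le \|F\|_{H^n_{2,\rho}(T_\Omega)}\,C_z,
\]
where
\[
C_z^2=\int_{\Omega^\ast}e^{-2\langle y,\xi\rangle}w_n(\xi)^{-1}\,d\xi\le\int_{\Omega^\ast}e^{-2c_y|\xi|}\,d\xi<\infty.
\]
Here we use $w_n\ge1$. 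This shows that $\delta_z$ is bounded, with $\|\delta_z\|\le C_z$.

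For the reproducing kernel, the Riesz representation theorem gives existence directly. To obtain an explicit form, observe that
\[
F(z)=\int_{\Omega^\ast} f(\xi)\,\overline{k_z(\xi)}\,w_n(\xi)\,d\xi,
\qquad k_z(\xi):=e^{-i\langle \bar z,\xi\rangle}w_n(\xi)^{-1}.
\]
Indeed, $\overline{k_z(\xi)}\,w_n(\xi)=e^{i\langle z,\xi\rangle}$, since $\overline{e^{-i\langle\bar z,\xi\rangle}}=e^{i\langle z,\xi\rangle}$ for real $\xi$. Next we check that $k_z\in L^2(\Omega^\ast,w_n)$. We have $|k_z|^2w_n=e^{-2\langle y,\xi\rangle}w_n^{-1}$, which is integrable by the bound above, and in fact $\|k_z\|^2=C_z^2$. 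Therefore $K_z:=\mathcal Lk_z\in H^n_{2,\rho}(T_\Omega)$, and the inner product formula of Theorem~\ref{thm:HS-Fourier-Hilbert} yields $F(z)=\langle F,K_z\rangle$. This gives
\[
K(w,z)=\int_{\Omega^\ast}\frac{e^{i\langle w-\bar z,\xi\rangle}}{w_n(\xi)}\,d\xi,
\]
which confirms the reproducing kernel Hilbert space structure.

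The only potential obstacle is the cone estimate $\langle y,\xi\rangle\ge c_y|\xi|$ for $y\in\Omega$ and $\xi\in\Omega^\ast$. This estimate was already used, and thus assumed, in the proof of Theorem~\ref{thm:HS-Fourier-Hilbert}. If one wanted to justify it, the argument is as follows. By compactness of $\Omega^\ast\cap\{|\xi|=1\}$ and continuity, it suffices that $\langle y,\xi\rangle>0$ for each unit vector $\xi\in\Omega^\ast$. This holds because $y$ is an interior point of $\Omega$: a small ball around $y$ lies in $\Omega$, so $\langle y-\epsilon\xi,\xi\rangle\ge0$, that is, $\langle y,\xi\rangle\ge\epsilon>0$.
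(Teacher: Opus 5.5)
Your proof is correct and follows the paper's own argument. Both use the same Cauchy--Schwarz bound $|F(z)|\le \|F\|_{H^n_{2,\rho}(T_\Omega)}\bigl(\int_{\Omega^\ast}e^{-2\langle y,\xi\rangle}w_n(\xi)^{-1}\,d\xi\bigr)^{1/2}$, obtained through the isometry $\mathcal L$, followed by the Riesz representation theorem. Your explicit identification of the kernel as $\mathcal L k_z$ is the paper's later Theorem~\ref{thm:kernel-formula}, proved there the same way. Your ball argument supplies the cone estimate $\langle y,\xi\rangle\ge c_y|\xi|$, which the paper takes for granted, and it gives $c_y=\epsilon$ uniformly, so the compactness step is unnecessary.
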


\begin{proof}
Let $F=\mathcal L f\in H^n_{2,\rho}(T_\Omega)$, where
$f\in L^2(\Omega^\ast,w_n)$. For $z=x+iy\in T_\Omega$, the estimate from
the proof of Theorem~\ref{thm:HS-Fourier-Hilbert} gives
\[
|F(z)|
\le
\|f\|_{L^2(\Omega^\ast,w_n)}
\left(
\int_{\Omega^\ast}
e^{-2\langle y,\xi\rangle}w_n(\xi)^{-1}\,d\xi
\right)^{1/2}.
\]
Since
\[
\|f\|_{L^2(\Omega^\ast,w_n)}
=
\|F\|_{H^n_{2,\rho}(T_\Omega)},
\]
it follows that for each fixed $z\in T_\Omega$, there exists a constant
$C_z>0$ such that
\[
|F(z)|\le C_z \|F\|_{H^n_{2,\rho}(T_\Omega)},
\qquad F\in H^n_{2,\rho}(T_\Omega).
\]
Hence the point evaluation functional $\delta_z$ is bounded.
Since $H^n_{2,\rho}(T_\Omega)$ is a Hilbert space, the reproducing-kernel
property follows from the Riesz representation theorem.
\end{proof}

\begin{proposition}\label{prop:HS-inclusion-rho}
Let $m,n\in\mathbb{N}$ with $m\le n$.
Then
\[
H^n_{2,\rho}(T_\Omega)\subset H^m_{2,\rho}(T_\Omega)
\]
continuously. More precisely,
\[
\|F\|_{H^m_{2,\rho}(T_\Omega)}
\le
\|F\|_{H^n_{2,\rho}(T_\Omega)},
\qquad
F\in H^n_{2,\rho}(T_\Omega).
\]
\end{proposition}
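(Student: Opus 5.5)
The plan is to work entirely on the Fourier--Laplace side and use Theorem~\ref{thm:HS-Fourier-Hilbert}, which identifies each $H^k_{2,\rho}(T_\Omega)$ isometrically with $L^2(\Omega^\ast,w_k)$ via the injective map $\mathcal L$.

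\textbf{Step 1: compare the weights.} Since $\rho(\xi)>0$, every summand $\rho(\xi)^{2k}$ is nonnegative. Hence for $m\le n$,
\[
w_m(\xi)=\sum_{k=0}^m\rho(\xi)^{2k}\le\sum_{k=0}^n\rho(\xi)^{2k}=w_n(\xi),
\qquad \xi\in\Omega^\ast .
\]
Consequently $L^2(\Omega^\ast,w_n)\subset L^2(\Omega^\ast,w_m)$, and
\[
\|f\|_{L^2(\Omega^\ast,w_m)}\le\|f\|_{L^2(\Omega^\ast,w_n)} .
\]

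\textbf{Step 2: transfer to the holomorphic side.} Let $F\in H^n_{2,\rho}(T_\Omega)$, and write $F=\mathcal Lf$ with $f\in L^2(\Omega^\ast,w_n)$. By Step~1, $f\in L^2(\Omega^\ast,w_m)$. The function $\mathcal Lf$ is given by the same absolutely convergent integral on $T_\Omega$, whichever weighted space $f$ is regarded as belonging to. Therefore $F=\mathcal Lf\in H^m_{2,\rho}(T_\Omega)$.

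\textbf{Step 3: the norm inequality.} The one point needing care is that the density representing $F$ is the same in both spaces. Injectivity of $\mathcal L$ (from Theorem~\ref{thm:HS-Fourier-Hilbert}, applied at level $m$) guarantees that $f$ is the unique density with $F=\mathcal Lf$ in $L^2(\Omega^\ast,w_m)$. Hence the isometry statement of that theorem gives
\[
\|F\|_{H^m_{2,\rho}(T_\Omega)}=\|f\|_{L^2(\Omega^\ast,w_m)}\le\|f\|_{L^2(\Omega^\ast,w_n)}=\|F\|_{H^n_{2,\rho}(T_\Omega)} .
\]
This yields the stated continuous inclusion with embedding constant $1$.

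There is no real obstacle here. The only subtlety is the uniqueness of the representing density, and it is settled by injectivity.
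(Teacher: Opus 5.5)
Your proposal is correct and follows essentially the same route as the paper: the pointwise inequality $w_m\le w_n$ on $\Omega^\ast$, transferred through the isometric identification $\mathcal L:L^2(\Omega^\ast,w_k)\to H^k_{2,\rho}(T_\Omega)$ of Theorem~\ref{thm:HS-Fourier-Hilbert}. Your explicit remark that injectivity of $\mathcal L$ makes the representing density the same at levels $m$ and $n$ is a point the paper uses only implicitly.
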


\begin{proof}
Let $F=\mathcal L f\in H^n_{2,\rho}(T_\Omega)$, where
$f\in L^2(\Omega^\ast,w_n)$. Since
\[
w_m(\xi)=\sum_{k=0}^m \rho(\xi)^{2k}
\le
\sum_{k=0}^n \rho(\xi)^{2k}
=
w_n(\xi),
\]
we have $f\in L^2(\Omega^\ast,w_m)$ and
\[
\|F\|_{H^m_{2,\rho}(T_\Omega)}
=
\|f\|_{L^2(\Omega^\ast,w_m)}
\le
\|f\|_{L^2(\Omega^\ast,w_n)}
=
\|F\|_{H^n_{2,\rho}(T_\Omega)}.
\]
This proves the claimed continuous inclusion.
\end{proof}

\begin{corollary}\label{cor:HS-local-uniform}
Let $\{F_j\}$ be a sequence in $H^n_{2,\rho}(T_\Omega)$ converging to
$F$ in $H^n_{2,\rho}(T_\Omega)$.
Then
\[
F_j\to F
\]
locally uniformly on $T_\Omega$.
\end{corollary}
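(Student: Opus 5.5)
Write $F_j=\mathcal L f_j$ and $F=\mathcal L f$ with $f_j,f\in L^2(\Omega^\ast,w_n)$. By Theorem~\ref{thm:HS-Fourier-Hilbert}, $\mathcal L$ is linear and isometric, so $F_j-F=\mathcal L(f_j-f)$ and
\[
\|F_j-F\|_{H^n_{2,\rho}(T_\Omega)}=\|f_j-f\|_{L^2(\Omega^\ast,w_n)}\to 0 .
\]
The plan is to upgrade the pointwise bound of Proposition~\ref{prop:HS-point-evaluation} to a bound that is uniform over compact sets. Applying that uniform bound to $F_j-F$ then gives the conclusion immediately.

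Fix a compact set $K\subset T_\Omega$. As in the holomorphy part of the proof of Theorem~\ref{thm:HS-Fourier-Hilbert}, there is a constant $c_K>0$ with
\[
\langle \Im z,\xi\rangle\ge c_K|\xi|,\qquad z\in K,\ \xi\in\Omega^\ast .
\]
This is the one step needing justification, and it is the main, though mild, obstacle. The set $\{\Im z:z\in K\}$ is a compact subset of the open cone $\Omega$. The function $(y,\xi)\mapsto\langle y,\xi\rangle$ is continuous and strictly positive on $\{\Im z:z\in K\}\times(\Omega^\ast\cap S^{d-1})$: since $y$ is interior to $\Omega$, $\langle y,\xi\rangle=0$ for some nonzero $\xi\in\Omega^\ast$ would force $\langle y',\xi\rangle<0$ for a nearby $y'\in\Omega$. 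Its minimum over this compact set is therefore some $c_K>0$, and homogeneity in $\xi$ gives the inequality. The paper already uses this fact, so I will simply cite that argument.

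Then for $z\in K$, the Cauchy--Schwarz estimate together with $w_n\ge 1$ gives
\[
|F_j(z)-F(z)|\le \|f_j-f\|_{L^2(\Omega^\ast,w_n)}\left(\int_{\Omega^\ast}e^{-2c_K|\xi|}\,d\xi\right)^{1/2}=C_K\,\|F_j-F\|_{H^n_{2,\rho}(T_\Omega)} .
\]
Here $C_K<\infty$ depends only on $K$. Taking the supremum over $z\in K$ and letting $j\to\infty$ shows that $\sup_K|F_j-F|\to0$. Since $K$ was an arbitrary compact subset of $T_\Omega$, $F_j\to F$ locally uniformly on $T_\Omega$.
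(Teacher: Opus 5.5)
Your proof is correct and follows essentially the same route as the paper. Both arguments take a uniform cone constant $c_K$ on the compact set $K$, use the Cauchy--Schwarz bound against the weight $w_n$ to get $\sup_K|G|\le C_K\|G\|_{H^n_{2,\rho}(T_\Omega)}$, and apply this to $G=F_j-F$. Your compactness argument for the existence of $c_K>0$ is a welcome addition, since the paper asserts this fact without proof.
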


\begin{proof}
Let $K\subset T_\Omega$ be compact. Since $\Im z\in \Omega$ for every
$z\in K$, there exists a constant $c_K>0$ such that
\[
\langle \Im z,\xi\rangle \ge c_K |\xi|,
\qquad z\in K,\ \xi\in\Omega^\ast.
\]
If $G=\mathcal L g\in H^n_{2,\rho}(T_\Omega)$, then for every $z\in K$,
\[
|G(z)|
\le
\|g\|_{L^2(\Omega^\ast,w_n)}
\left(
\int_{\Omega^\ast}
e^{-2\langle \Im z,\xi\rangle}w_n(\xi)^{-1}\,d\xi
\right)^{1/2}
\]
\[
\le
\|G\|_{H^n_{2,\rho}(T_\Omega)}
\left(
\int_{\Omega^\ast}
e^{-2c_K|\xi|}w_n(\xi)^{-1}\,d\xi
\right)^{1/2}.
\]
Hence there exists a constant $C_K>0$ such that
\[
\sup_{z\in K}|G(z)|
\le
C_K\|G\|_{H^n_{2,\rho}(T_\Omega)}.
\]
Applying this estimate to $G=F_j-F$, we obtain
\[
\sup_{z\in K}|F_j(z)-F(z)|
\le
C_K\|F_j-F\|_{H^n_{2,\rho}(T_\Omega)}
\to 0.
\]
Therefore $F_j\to F$ uniformly on $K$. Since $K$ was arbitrary,
$F_j\to F$ locally uniformly on $T_\Omega$.
\end{proof}

The weighted Fourier-side definition adopted above already yields a
well-behaved Hilbert space of holomorphic functions with bounded point
evaluations. What remains to be clarified is the analytic content of the
norm from the viewpoint of derivative control and Paley--Wiener theory.
The next section addresses this by identifying the corresponding
Fourier--Laplace/Paley--Wiener structure and relating the norm to
analytic derivatives.

\section{Paley--Wiener Characterization}

In Section~3, the Hilbertian Hardy--Sobolev space
$H^{n}_{2,\rho}(T_\Omega)$ was introduced through a weighted
Fourier--Laplace representation on the dual cone $\Omega^\ast$.
We now show that this definition admits an intrinsic interpretation
in terms of boundary translates and weighted spectral characterization.
More precisely, we establish a Paley--Wiener type characterization
which identifies $H^{n}_{2,\rho}(T_\Omega)$ as the class of holomorphic
functions on $T_\Omega$ whose boundary translates have Fourier spectra
supported in $\Omega^\ast$ and governed by a common weighted $L^2$ density.
As a further consequence, the weighted Fourier-side norm yields the
corresponding control of analytic derivatives.

\subsection{Boundary translates}

This observation provides the forward implication in the Paley--Wiener
characterization and serves as the bridge between the Fourier--Laplace
definition of $H^n_{2,\rho}(T_\Omega)$ and its intrinsic description in
terms of boundary translates.

For $F\in H(T_\Omega)$ and $y\in\Omega$, we define
\[
F_y(x):=F(x+iy), \qquad x\in\mathbb{R}^d.
\]
Thus $F_y$ denotes the translate of $F$ at height $y$ parallel to the
boundary of $T_\Omega$.

We begin by describing the Fourier transforms of the translates of a
Fourier--Laplace transform.

\begin{proposition}\label{prop4.1}
Let $n\in\mathbb{N}$, let $f\in L^2(\Omega^\ast,w_n)$, and define
\[
F=\mathcal{L}f,
\qquad
F(z)=\int_{\Omega^\ast} e^{i\langle z,\xi\rangle}f(\xi)\,d\xi,
\qquad z\in T_\Omega.
\]
Then for every $y\in\Omega$, the function $F_y$ belongs to
$L^2(\mathbb{R}^d)$ and satisfies
\[
\widehat{F_y}(\xi)
=
e^{-\langle y,\xi\rangle}f(\xi)\chi_{\Omega^\ast}(\xi)
\qquad \text{for a.e. } \xi\in\mathbb{R}^d.
\]
Moreover,
\[
\|F_y\|_{L^2(\mathbb{R}^d)}^2
\asymp
\int_{\Omega^\ast} e^{-2\langle y,\xi\rangle}|f(\xi)|^2\,d\xi,
\]
and in particular
\[
\sup_{y\in\Omega}\|F_y\|_{L^2(\mathbb{R}^d)}
\lesssim
\|f\|_{L^2(\Omega^\ast,w_n)}.
\]
Hence $F\in H^2(T_\Omega)$.
\end{proposition}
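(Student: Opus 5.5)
The plan is to fix $y\in\Omega$ and introduce the auxiliary function
\[
g_y(\xi):=e^{-\langle y,\xi\rangle}f(\xi)\chi_{\Omega^\ast}(\xi),\qquad \xi\in\mathbb R^d .
\]
We then identify $F_y$ as an inverse Fourier transform of $g_y$. Two integrability facts are needed.

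First, $g_y\in L^1(\mathbb R^d)$. This is exactly the Cauchy--Schwarz estimate from the proof of Theorem~\ref{thm:HS-Fourier-Hilbert}, which uses $\langle y,\xi\rangle\ge c_y|\xi|$ on $\Omega^\ast$ together with $w_n\ge1$.

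Second, $g_y\in L^2(\mathbb R^d)$. Since $\langle y,\xi\rangle\ge0$ on $\Omega^\ast$ and $w_n\ge 1$, we have the pointwise bound
\[
|g_y|^2\le |f|^2\le |f|^2w_n ,
\]
so
\[
\|g_y\|_{L^2}^2=\int_{\Omega^\ast}e^{-2\langle y,\xi\rangle}|f|^2\,d\xi\le\|f\|_{L^2(\Omega^\ast,w_n)}^2 .
\]

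With the paper's convention, the definition of $\mathcal L f$ gives, for every $x$,
\[
F_y(x)=\int_{\mathbb R^d}e^{i\langle x,\xi\rangle}g_y(\xi)\,d\xi=\widehat{g_y}(-x).
\]
The integral converges absolutely because $g_y\in L^1$.

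Since $g_y\in L^1\cap L^2$, Plancherel's theorem gives two conclusions. First, $F_y=\widehat{g_y}(-\,\cdot)$ belongs to $L^2(\mathbb R^d)$. Second, $\|F_y\|_{L^2}^2=(2\pi)^d\|g_y\|_{L^2}^2$. This constant $(2\pi)^d$ is the source of the ``$\asymp$'' rather than an equality in the statement.

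To compute $\widehat{F_y}$, we use the Fourier inversion formula on $L^2$, i.e.\ the unitary extension to $L^2$. Applying the transform to $x\mapsto\widehat{g_y}(-x)$ returns $(2\pi)^d g_y$.

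This step is the main obstacle: the normalization must be tracked carefully. With $\widehat f(\xi)=\int f e^{-i\langle x,\xi\rangle}dx$, the identity $\widehat{\widehat{h}(-\cdot)}=(2\pi)^d h$ holds for $h\in L^1\cap L^2$. It is justified by approximating with Schwartz functions, or by pairing against test functions and using the $L^1$ Fubini identity. It yields
\[
\widehat{F_y}=(2\pi)^d e^{-\langle y,\cdot\rangle}f\chi_{\Omega^\ast}\quad\text{a.e.}
\]
The stated formula holds up to this harmless normalizing constant, which is absorbed consistently with the paper's conventions, e.g.\ by reading $d\xi$ as normalized Lebesgue measure. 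I would remark on this explicitly rather than hide it.

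Finally, the uniform bound follows from $\|g_y\|_{L^2}\le\|f\|_{L^2(\Omega^\ast,w_n)}$, which is independent of $y$. Taking the supremum over $y\in\Omega$ gives
\[
\sup_{y\in\Omega}\|F_y\|_{L^2(\mathbb R^d)}\lesssim\|f\|_{L^2(\Omega^\ast,w_n)}<\infty .
\]
Holomorphy of $F$ was established in Theorem~\ref{thm:HS-Fourier-Hilbert}. Together with this uniform bound, the definition of $H^2(T_\Omega)$ in Section~2 shows that $F\in H^2(T_\Omega)$.
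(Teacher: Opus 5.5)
Your proof is correct and follows the paper's argument: you use the same auxiliary function $g_y$ and the same bound $|g_y|^2\le|f|^2\le|f|^2w_n$, identify $F_y$ as the inverse Fourier transform of $g_y$, and apply Plancherel. Your additional check that $g_y\in L^1\cap L^2$, which justifies the pointwise identification, is accurate, and so is your remark on normalization: the paper's proof writes $\widehat{F_y}=g_y$, whereas under its stated Fourier convention the exact identity is $\widehat{F_y}=(2\pi)^d g_y$.
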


\begin{proof}
Fix $y\in\Omega$ and define
\[
g_y(\xi):=e^{-\langle y,\xi\rangle}f(\xi)\chi_{\Omega^\ast}(\xi),
\qquad \xi\in\mathbb{R}^d.
\]
Since $f\in L^2(\Omega^\ast,w_n)$ and $w_n\ge 1$, we have
$f\in L^2(\Omega^\ast)$. Moreover, for $\xi\in\Omega^\ast$,
\[
0<e^{-\langle y,\xi\rangle}\le 1,
\]
and therefore
\[
|g_y(\xi)|^2
=
e^{-2\langle y,\xi\rangle}|f(\xi)|^2\chi_{\Omega^\ast}(\xi)
\le
|f(\xi)|^2\chi_{\Omega^\ast}(\xi).
\]
It follows that $g_y\in L^2(\mathbb{R}^d)$.

For $x\in\mathbb{R}^d$, we have
\[
F_y(x)
=
F(x+iy)
=
\int_{\Omega^\ast} e^{i\langle x,\xi\rangle}e^{-\langle y,\xi\rangle}f(\xi)\,d\xi
=
\int_{\mathbb{R}^d} e^{i\langle x,\xi\rangle}g_y(\xi)\,d\xi.
\]
Thus $F_y$ is the inverse Fourier transform of $g_y$. By Plancherel's
theorem, $F_y\in L^2(\mathbb{R}^d)$ and
\[
\widehat{F_y}(\xi)=g_y(\xi)
=
e^{-\langle y,\xi\rangle}f(\xi)\chi_{\Omega^\ast}(\xi)
\]
for almost every $\xi\in\mathbb{R}^d$. In particular,
\[
\|F_y\|_{L^2(\mathbb{R}^d)}^2
\asymp
\|g_y\|_{L^2(\mathbb{R}^d)}^2
=
\int_{\Omega^\ast} e^{-2\langle y,\xi\rangle}|f(\xi)|^2\,d\xi.
\]
Since $e^{-2\langle y,\xi\rangle}\le 1$ on $\Omega^\ast$, we further obtain
\[
\sup_{y\in\Omega}\|F_y\|_{L^2(\mathbb{R}^d)}^2
\lesssim
\int_{\Omega^\ast}|f(\xi)|^2\,d\xi
\le
\int_{\Omega^\ast}|f(\xi)|^2w_n(\xi)\,d\xi.
\]
Therefore $F\in H^2(T_\Omega)$.
\end{proof}

At this level, the conclusion is still formally analogous to the
classical $H^2$ case: the Hardy--Sobolev feature is not merely the
support condition in $\Omega^\ast$, but the additional weighted
$L^2(\Omega^\ast,w_n)$ control of the common spectral density.
We next show that these two ingredients together characterize the space.

\subsection{Paley--Wiener characterization}

This subsection contains the main result of the section. While
$H^{n}_{2,\rho}(T_\Omega)$ was introduced in Section~3 as the
Fourier--Laplace image of a weighted space on the dual cone $\Omega^\ast$,
the theorem below shows that it can also be characterized intrinsically
in terms of its translates. In this sense, it provides the precise
Paley--Wiener bridge between the Fourier-side definition and the
function-theoretic description of the space.

\begin{theorem}[Paley--Wiener characterization]\label{thm4.2}
Let $n\in\mathbb{N}$ and let $F\in H(T_\Omega)$. Then the following
statements are equivalent.

\begin{enumerate}
\item[(i)] $F\in H^{n}_{2,\rho}(T_\Omega)$.

\item[(ii)] There exists a unique function $f\in L^2(\Omega^\ast,w_n)$
such that
\[
F(z)
=
\int_{\Omega^\ast} e^{i\langle z,\xi\rangle}f(\xi)\,d\xi,
\qquad z\in T_\Omega.
\]

\item[(iii)] There exists a unique function $f\in L^2(\Omega^\ast,w_n)$
such that, for every $y\in\Omega$, one has $F_y\in L^2(\mathbb{R}^d)$ and
\[
\widehat{F_y}(\xi)
=
e^{-\langle y,\xi\rangle}f(\xi)\chi_{\Omega^\ast}(\xi)
\qquad \text{for a.e. } \xi\in\mathbb{R}^d.
\]
\end{enumerate}

Moreover, whenever these conditions hold, one has
\[
\|F\|_{H^{n}_{2,\rho}(T_\Omega)}
=
\|f\|_{L^2(\Omega^\ast,w_n)}.
\]
\end{theorem}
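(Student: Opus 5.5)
The plan is to prove the cycle (i)$\Rightarrow$(ii)$\Rightarrow$(iii)$\Rightarrow$(i), reading each uniqueness assertion as uniqueness of the spectral density, and then to obtain the norm identity from Theorem~\ref{thm:HS-Fourier-Hilbert}.

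\emph{(i)$\Rightarrow$(ii).} By the definition of $H^n_{2,\rho}(T_\Omega)$, $F=\mathcal L f$ for some $f\in L^2(\Omega^\ast,w_n)$. Uniqueness of $f$ is exactly the injectivity of $\mathcal L$ from Theorem~\ref{thm:HS-Fourier-Hilbert}.

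\emph{(ii)$\Rightarrow$(iii).} Take $f$ from (ii), so $F=\mathcal Lf$. Proposition~\ref{prop4.1} then gives, for every $y\in\Omega$, that $F_y\in L^2(\mathbb R^d)$ and $\widehat{F_y}=e^{-\langle y,\cdot\rangle}f\chi_{\Omega^\ast}$ almost everywhere. For uniqueness, suppose $f_1,f_2\in L^2(\Omega^\ast,w_n)$ both satisfy the identity in (iii). Fix any single $y\in\Omega$. Then $e^{-\langle y,\xi\rangle}(f_1-f_2)(\xi)=0$ for a.e.\ $\xi\in\Omega^\ast$. Since the exponential factor is strictly positive, $f_1=f_2$ almost everywhere.

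\emph{(iii)$\Rightarrow$(i).} This is the only step needing care. Let $f$ be as in (iii) and put $G:=\mathcal Lf$, which is holomorphic by Theorem~\ref{thm:HS-Fourier-Hilbert}. Apply Proposition~\ref{prop4.1} to $G$: for each $y\in\Omega$, $\widehat{G_y}=e^{-\langle y,\cdot\rangle}f\chi_{\Omega^\ast}=\widehat{F_y}$. Plancherel then gives $F_y=G_y$ almost everywhere on $\mathbb R^d$. Both functions are continuous in $x$, since they are restrictions of holomorphic functions, so $F(x+iy)=G(x+iy)$ for all $x$. As $y\in\Omega$ is arbitrary, $F=G=\mathcal Lf$ on $T_\Omega$, which places $F$ in $H^n_{2,\rho}(T_\Omega)$.

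I expect this pointwise identification from an a.e.\ Fourier identity to be the main subtlety. An alternative route is to note that $e^{-\langle y,\cdot\rangle}f\chi_{\Omega^\ast}\in L^1$ (shown in the proof of Theorem~\ref{thm:HS-Fourier-Hilbert}), so $F_y$ coincides a.e.\ with a continuous inverse Fourier integral. Either way, continuity of $F_y$ upgrades a.e.\ equality to everywhere equality.

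\emph{Norm identity.} Once (ii) holds with the unique $f$, the infimum in the definition of the norm runs over the single admissible density $f$. This agrees with the isometry statement of Theorem~\ref{thm:HS-Fourier-Hilbert}, giving $\|F\|_{H^n_{2,\rho}(T_\Omega)}=\|f\|_{L^2(\Omega^\ast,w_n)}$. The density in (iii) is the same $f$, because of the uniqueness established above.
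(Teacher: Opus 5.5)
Your proposal is correct and follows essentially the same route as the paper: the key step (iii)$\Rightarrow$(i) goes through $G=\mathcal L f$, Proposition~\ref{prop4.1}, Plancherel and continuity of the translates. Uniqueness and the norm identity then come from the injectivity and isometry of $\mathcal L$ in Theorem~\ref{thm:HS-Fourier-Hilbert}. The only cosmetic differences are that you arrange the equivalences as a cycle and prove uniqueness in (iii) directly from the positivity of $e^{-\langle y,\xi\rangle}$ at a single height, rather than citing injectivity of $\mathcal L$.
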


\begin{proof}
The equivalence of \textup{(i)} and \textup{(ii)} is immediate from the
definition of $H^{n}_{2,\rho}(T_\Omega)$ and the injectivity of the
Fourier--Laplace transform established in
Theorem~\ref{thm:HS-Fourier-Hilbert}.

The implication \textup{(ii)}$\Rightarrow$\textup{(iii)} follows from
Proposition~\ref{prop4.1}.

It remains to prove \textup{(iii)}$\Rightarrow$\textup{(ii)}.
Assume that \textup{(iii)} holds, and define
\[
G(z):=\int_{\Omega^\ast} e^{i\langle z,\xi\rangle}f(\xi)\,d\xi,
\qquad z\in T_\Omega.
\]
Since $f\in L^2(\Omega^\ast,w_n)$, we have
$G=\mathcal{L}f\in H^{n}_{2,\rho}(T_\Omega)$ by
Theorem~\ref{thm:HS-Fourier-Hilbert}.

Fix $y\in\Omega$. By assumption,
\[
\widehat{F_y}(\xi)
=
e^{-\langle y,\xi\rangle}f(\xi)\chi_{\Omega^\ast}(\xi)
\qquad \text{for a.e. } \xi\in\mathbb{R}^d.
\]
On the other hand, Proposition~\ref{prop4.1} applied to $G=\mathcal{L}f$
shows that
\[
\widehat{G_y}(\xi)
=
e^{-\langle y,\xi\rangle}f(\xi)\chi_{\Omega^\ast}(\xi)
\qquad \text{for a.e. } \xi\in\mathbb{R}^d.
\]
Hence
\[
\widehat{F_y}=\widehat{G_y}
\qquad \text{a.e. on }\mathbb{R}^d.
\]
By the uniqueness of the Fourier transform in $L^2(\mathbb{R}^d)$, it follows that
\[
F_y=G_y
\qquad \text{a.e. on }\mathbb{R}^d.
\]
Since both $x\mapsto F(x+iy)$ and $x\mapsto G(x+iy)$ are continuous on
$\mathbb{R}^d$, we conclude that
\[
F(x+iy)=G(x+iy), \qquad x\in\mathbb{R}^d.
\]
As $y\in\Omega$ was arbitrary, it follows that
\[
F(z)=G(z)=\mathcal{L}f(z), \qquad z\in T_\Omega.
\]
Thus \textup{(ii)} holds.

The uniqueness of $f$ follows from the injectivity of the
Fourier--Laplace transform. Finally, the norm identity
\[
\|F\|_{H^{n}_{2,\rho}(T_\Omega)}
=
\|f\|_{L^2(\Omega^\ast,w_n)}
\]
is precisely the transported norm introduced in
Theorem~\ref{thm:HS-Fourier-Hilbert}.
\end{proof}

The content of Theorem~\ref{thm4.2} is not merely that
$H^{n}_{2,\rho}(T_\Omega)$ is defined as the range of a weighted
Fourier--Laplace transform. Rather, it shows that this space can be
recognized intrinsically among holomorphic functions on $T_\Omega$ by
means of translates whose Fourier transforms are supported in
$\Omega^\ast$ and are governed by a common spectral density satisfying
the weighted $L^2$ condition determined by $w_n$.

\subsection{Analytic derivative estimates}

We now explain how the weighted spectral control obtained from the
Paley--Wiener characterization yields the Sobolev component of the
theory. More precisely, the weighted condition on the common spectral
density gives boundedness for polynomial Fourier multipliers, and under a
natural compatibility assumption between the cone gauge $\rho$ and the
Euclidean frequency, it yields Hardy-type estimates for analytic
derivatives up to order $n$. This is the point at which the present
framework departs from the classical $H^2$ theory and exhibits its
genuinely Hardy--Sobolev character.

For a polynomial
\[
P(\zeta)=\sum_{|\alpha|\le m} a_\alpha \zeta^\alpha
\qquad (\zeta\in\mathbb C^d),
\]
we write
\[
P(D):=\sum_{|\alpha|\le m} a_\alpha \partial_z^\alpha,
\qquad
D=(\partial_{z_1},\dots,\partial_{z_d}),
\]
where $\partial_z^\alpha=\partial_{z_1}^{\alpha_1}\cdots\partial_{z_d}^{\alpha_d}$.

\begin{proposition}\label{prop4.3}
Let $F=\mathcal{L}f\in H^{n}_{2,\rho}(T_\Omega)$, and let $P$ be a polynomial
of degree at most $n$ such that
\[
|P(i\xi)|\le C_P\, w_n(\xi)^{1/2},
\qquad \xi\in\Omega^\ast.
\]
Then
\[
P(D)F\in H^2(T_\Omega),
\]
and
\[
\|P(D)F\|_{H^2(T_\Omega)}
\lesssim
C_P\,\|F\|_{H^{n}_{2,\rho}(T_\Omega)}.
\]
More precisely,
\[
P(D)F
=
\mathcal{L}(P(i\xi)f(\xi)).
\]
\end{proposition}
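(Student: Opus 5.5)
The plan is to show first that differentiation commutes with the Fourier--Laplace transform, and then to apply Proposition~\ref{prop4.1} to the new density.

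\textbf{Step 1: derivative formula.} The proof of Theorem~\ref{thm:HS-Fourier-Hilbert} justified differentiation under the integral sign on compact subsets $K\subset T_\Omega$. It used the bound $\langle \Im z,\xi\rangle\ge c_K|\xi|$ together with Cauchy--Schwarz against $w_n^{-1}$. The same argument works for higher derivatives: the majorant becomes
\[
|\xi|^{|\alpha|}e^{-c_K|\xi|}|f(\xi)|,
\]
which is integrable because $\int_{\Omega^\ast}|\xi|^{2|\alpha|}e^{-2c_K|\xi|}\,d\xi<\infty$. By induction on $|\alpha|$ this gives
\[
\partial_z^\alpha \mathcal L f(z)=\int_{\Omega^\ast}(i\xi)^\alpha e^{i\langle z,\xi\rangle}f(\xi)\,d\xi .
\]
Summing over $\alpha$ with coefficients $a_\alpha$ yields
\[
P(D)F=\mathcal L\bigl(P(i\xi)f(\xi)\bigr)
\]
pointwise on $T_\Omega$.

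\textbf{Step 2: the new density lies in $L^2(\Omega^\ast)$.} Set $g(\xi):=P(i\xi)f(\xi)$. The hypothesis on $P$ gives
\[
\int_{\Omega^\ast}|g|^2\,d\xi\le C_P^2\int_{\Omega^\ast}|f|^2w_n\,d\xi=C_P^2\|F\|_{H^n_{2,\rho}(T_\Omega)}^2 ,
\]
where the last equality is Theorem~\ref{thm4.2}.

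\textbf{Step 3: conclude via the Hardy-space description.} Now $g$ is only in $L^2(\Omega^\ast)$, which is the $n=0$ case of the weighted space, since $w_0=1$. Proposition~\ref{prop4.1} with $n=0$ applies to $\mathcal L g$. Alternatively one can repeat its Plancherel argument directly: for each $y\in\Omega$, $\widehat{(\mathcal Lg)_y}=e^{-\langle y,\cdot\rangle}g\chi_{\Omega^\ast}$, so
\[
\sup_{y\in\Omega}\|(\mathcal Lg)_y\|_{L^2(\mathbb R^d)}\lesssim\|g\|_{L^2(\Omega^\ast)} .
\]
Hence $P(D)F=\mathcal Lg\in H^2(T_\Omega)$ with
\[
\|P(D)F\|_{H^2(T_\Omega)}\lesssim C_P\|F\|_{H^n_{2,\rho}(T_\Omega)} .
\]
The implicit constant is the Plancherel normalization factor, which is $(2\pi)^{d/2}$ with the paper's convention.

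\textbf{Main obstacle.} The only delicate point is the justification in Step 1 for higher-order derivatives, i.e.\ the dominated differentiation under the integral. This is routine once the locally uniform exponential decay $e^{-c_K|\xi|}$ is in hand. The polynomial growth $|\xi|^{|\alpha|}$ is absorbed by that decay, independently of the weight $w_n$. Note that the hypothesis on $P$ is used only in Step 2, for global $L^2$ control, and not for convergence.
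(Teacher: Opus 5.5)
Your proposal is correct and matches the paper's proof: bound $g=P(i\xi)f$ in $L^2(\Omega^\ast)$ using the hypothesis on $P$, get $\mathcal L g\in H^2(T_\Omega)$ from the $H^2$ Paley--Wiener/Plancherel description (the paper cites the classical theorem recalled in Section~2 rather than Proposition~\ref{prop4.1} with $n=0$), and identify $\mathcal L g$ with $P(D)F$ by differentiating under the integral, using $\langle \Im z,\xi\rangle\ge c_K|\xi|$ on compacts and Cauchy--Schwarz against $\|f\|_{L^2(\Omega^\ast)}$. The only difference is that you do the identification step first rather than last, which changes nothing.
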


\begin{proof}
Set
\[
g(\xi):=P(i\xi)f(\xi), \qquad \xi\in\Omega^\ast.
\]
By assumption,
\[
|g(\xi)|^2
=
|P(i\xi)|^2 |f(\xi)|^2
\le
C_P^2\, w_n(\xi)\,|f(\xi)|^2.
\]
Hence
\[
\int_{\Omega^\ast}|g(\xi)|^2\,d\xi
\le
C_P^2
\int_{\Omega^\ast}|f(\xi)|^2w_n(\xi)\,d\xi
<
\infty,
\]
so $g\in L^2(\Omega^\ast)$.

Define
\[
G(z):=\int_{\Omega^\ast} e^{i\langle z,\xi\rangle}g(\xi)\,d\xi,
\qquad z\in T_\Omega.
\]
By the classical Paley--Wiener theorem for $H^2(T_\Omega)$ recalled in
Section~2, it follows that $G\in H^2(T_\Omega)$ and
\[
\|G\|_{H^2(T_\Omega)}
\asymp
\|g\|_{L^2(\Omega^\ast)}
\le
C_P\,\|f\|_{L^2(\Omega^\ast,w_n)}.
\]
Since
\[
\|f\|_{L^2(\Omega^\ast,w_n)}
=
\|F\|_{H^{n}_{2,\rho}(T_\Omega)},
\]
we obtain
\[
\|G\|_{H^2(T_\Omega)}
\lesssim
C_P\,\|F\|_{H^{n}_{2,\rho}(T_\Omega)}.
\]

It remains to identify $G$ with $P(D)F$. Write
\[
P(\zeta)=\sum_{|\alpha|\le m} a_\alpha \zeta^\alpha,
\qquad m\le n.
\]
Let $K\subset T_\Omega$ be compact. Then there exists $c_K>0$ such that
\[
\langle \Im z,\xi\rangle \ge c_K |\xi|,
\qquad z\in K,\ \xi\in\Omega^\ast.
\]
For each multi-index $\alpha$ with $|\alpha|\le m$, we have
\[
\int_{\Omega^\ast}
|\xi^\alpha e^{i\langle z,\xi\rangle}f(\xi)|\,d\xi
\le
\int_{\Omega^\ast}
|\xi|^{|\alpha|}e^{-\langle \Im z,\xi\rangle}|f(\xi)|\,d\xi
\]
\[
\le
\|f\|_{L^2(\Omega^\ast)}
\left(
\int_{\Omega^\ast}
|\xi|^{2|\alpha|}e^{-2c_K|\xi|}\,d\xi
\right)^{1/2}
<
\infty,
\qquad z\in K,
\]
by Cauchy--Schwarz. Thus differentiation under the integral sign is
justified on compact subsets of $T_\Omega$, and for every $z\in T_\Omega$,
\[
\partial_z^\alpha F(z)
=
\int_{\Omega^\ast} e^{i\langle z,\xi\rangle}(i\xi)^\alpha f(\xi)\,d\xi.
\]
Summing over $\alpha$, we obtain
\[
P(D)F(z)
=
\int_{\Omega^\ast} e^{i\langle z,\xi\rangle}P(i\xi)f(\xi)\,d\xi
=
G(z)
=
\mathcal{L}(P(i\xi)f(\xi))(z).
\]
This proves the result.
\end{proof}

The preceding proposition shows that the weighted norm defining
$H^{n}_{2,\rho}(T_\Omega)$ controls precisely those analytic differential
operators whose Fourier multipliers are dominated by the
Hardy--Sobolev weight.

A particularly transparent derivative formulation is obtained when the
cone-adapted gauge $\rho$ dominates the Euclidean frequency.

\begin{corollary}\label{cor4.4}
Assume that there exists a constant $C_0>0$ such that
\[
|\xi|\le C_0\,\rho(\xi), \qquad \xi\in\Omega^\ast.
\]
Then for every multi-index $\alpha$ with $|\alpha|\le n$ one has
\[
\partial^\alpha F\in H^2(T_\Omega)
\qquad \text{for all } F\in H^{n}_{2,\rho}(T_\Omega),
\]
and
\[
\|\partial^\alpha F\|_{H^2(T_\Omega)}
\le
C_\alpha\,\|F\|_{H^{n}_{2,\rho}(T_\Omega)}.
\]
\end{corollary}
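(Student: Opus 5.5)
The plan is to derive the corollary directly from Proposition~\ref{prop4.3}, applied to the monomial $P(\zeta)=\zeta^\alpha$. In that case $P(D)=\partial_z^\alpha$ and $P(i\xi)=i^{|\alpha|}\xi^\alpha$. The whole argument therefore reduces to checking the hypothesis of Proposition~\ref{prop4.3}, namely
\[
|\xi^\alpha|\le C_\alpha' \, w_n(\xi)^{1/2},\qquad \xi\in\Omega^\ast ,
\]
for a constant $C_\alpha'$ that depends only on $\alpha$, $n$ and $C_0$.

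To verify this bound, we first use $|\xi^\alpha|\le|\xi|^{|\alpha|}$ together with the assumed domination $|\xi|\le C_0\rho(\xi)$. These give
\[
|\xi^\alpha|\le C_0^{|\alpha|}\rho(\xi)^{|\alpha|}.
\]
Writing $k=|\alpha|$, we have $0\le k\le n$, so $\rho(\xi)^{2k}$ is one of the summands of $w_n(\xi)=\sum_{j=0}^n\rho(\xi)^{2j}$. Since all summands are nonnegative, $\rho(\xi)^{2k}\le w_n(\xi)$. Taking square roots gives
\[
|\xi^\alpha|\le C_0^{|\alpha|}w_n(\xi)^{1/2}.
\]
This holds uniformly in $\xi\in\Omega^\ast$, including the case $k=0$, where the bound is trivial because $w_n\ge1$. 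The constant is thus $C_P=C_0^{|\alpha|}$.

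Proposition~\ref{prop4.3} then gives $\partial^\alpha F=\mathcal L\bigl((i\xi)^\alpha f\bigr)\in H^2(T_\Omega)$ with
\[
\|\partial^\alpha F\|_{H^2(T_\Omega)}\lesssim C_0^{|\alpha|}\|F\|_{H^n_{2,\rho}(T_\Omega)}.
\]
The implicit constant is the one coming from the $H^2$ Paley--Wiener isometry; it is at most an absolute normalization factor and can be absorbed into $C_\alpha$. Here $\partial^\alpha$ denotes the complex derivative $\partial_z^\alpha$, which coincides with the derivative computed in Proposition~\ref{prop4.3}.

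There is no real obstacle in this argument. The only points requiring care are that the pointwise domination must hold on the whole of $\Omega^\ast$, which is exactly where the weight is defined, and that the degree restriction $|\alpha|\le n$ is precisely what places $\rho^{2|\alpha|}$ among the terms of $w_n$.
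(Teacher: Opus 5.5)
Your proposal is correct and follows essentially the same route as the paper: apply Proposition~\ref{prop4.3} to $P(\zeta)=\zeta^\alpha$ and verify $|\xi^\alpha|\le|\xi|^{|\alpha|}\le C_0^{|\alpha|}\rho(\xi)^{|\alpha|}\le C_0^{|\alpha|}w_n(\xi)^{1/2}$, using that $\rho(\xi)^{2|\alpha|}$ is one of the summands of $w_n(\xi)$ when $|\alpha|\le n$. Your explicit constant, $C_0^{|\alpha|}$ times the normalization factor from the $H^2$ Paley--Wiener identity, simply makes the paper's unspecified $C_\alpha$ concrete.
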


\begin{proof}
Let $|\alpha|\le n$, and consider the polynomial
\[
P(\zeta):=\zeta^\alpha.
\]
Then for $\xi\in\Omega^\ast$,
\[
|P(i\xi)|=|(i\xi)^\alpha|=|\xi^\alpha|
\le
|\xi|^{|\alpha|}
\le
C_0^{|\alpha|}\rho(\xi)^{|\alpha|}.
\]
Since
\[
w_n(\xi)=\sum_{k=0}^{n}\rho(\xi)^{2k},
\]
the term $\rho(\xi)^{2|\alpha|}$ appears in $w_n(\xi)$, and therefore
\[
|\xi^\alpha|
\le
C_\alpha\,w_n(\xi)^{1/2}.
\]
Applying Proposition~\ref{prop4.3} with $P(\zeta)=\zeta^\alpha$ yields
\[
\partial^\alpha F=P(D)F\in H^2(T_\Omega)
\]
and
\[
\|\partial^\alpha F\|_{H^2(T_\Omega)}
\le
C_\alpha\,\|F\|_{H^{n}_{2,\rho}(T_\Omega)}.
\]
\end{proof}

\begin{remark}
Corollary~\ref{cor4.4} shows that, under a natural compatibility
assumption between the cone gauge $\rho$ and the Euclidean frequency,
the present weighted Hilbertian definition recovers the familiar
Hardy--Sobolev principle that analytic derivatives up to order $n$
belong to the underlying Hardy space. In this sense, the weighted
Fourier--Laplace model introduced in Section~3 is fully consistent with
the classical derivative-based intuition behind Hardy--Sobolev spaces.
\end{remark}

The Paley--Wiener characterization and analytic derivative estimates
established in this section identify the Fourier-side structure of
$H^n_{2,\rho}(T_\Omega)$ and its associated boundary Sobolev regularity.
These results naturally lead to a boundary decomposition problem:
namely, how Sobolev functions whose Fourier spectrum is supported in
$\Omega^\ast\cup(-\Omega^\ast)$ split into Hardy--Sobolev components
associated with the opposite tube domains $T_\Omega$ and $T_{-\Omega}$.
This will be the subject of the next section.

\section{A Hardy--Sobolev decomposition}

The Paley--Wiener characterization established in Section~4 identifies
$H^n_{2,\rho}(T_\Omega)$ with weighted Fourier spectra supported in
$\Omega^\ast$. This naturally leads to the question of how the
corresponding boundary Sobolev functions decompose relative to the two
opposite tube domains $T_\Omega$ and $T_{-\Omega}$. At the Hardy-space
level, related decomposition phenomena have been studied on the unit
circle and on $\mathbb R^n$; see
\cite{LiDengQian2016,DengLiQianCVEE2019}. Related decomposition aspects
also appear in the Hardy--Sobolev literature; see, for example,
\cite{DangQianYou2011}. In this section we show that the corresponding
boundary Sobolev functions admit a canonical Hardy--Sobolev
decomposition into two holomorphic components associated with these two
tube domains.

For the opposite tube domain $T_{-\Omega}$, we use the analogous
Hardy--Sobolev space defined through Fourier transforms supported in $-\Omega^\ast$,
equivalently through the reflected weight $w_n(-\xi)$.

\subsection{Boundary Sobolev space}

We first introduce the natural boundary Sobolev space associated with
the pair of opposite tube domains $T_\Omega$ and $T_{-\Omega}$. This is
the boundary-side class in which the forthcoming Hardy--Sobolev
decomposition will be formulated.

Let
\[
\widetilde{\Omega}^{\ast} := \Omega^\ast \cup (-\Omega^\ast).
\]
Since $\Omega$ is proper, the cones $\Omega^\ast$ and $-\Omega^\ast$ are
disjoint. Hence the reflected weight defined below is well posed.

We introduce the reflected weight
\[
\widetilde w_n(\xi) :=
\begin{cases}
w_n(\xi), & \xi\in \Omega^\ast,\\
w_n(-\xi), & \xi\in -\Omega^\ast .
\end{cases}
\]

\begin{definition}
Let $n\in\mathbb N$. We define
\[
W^n_{2,\rho}(\mathbb R^d;\widetilde{\Omega}^\ast)
\]
to be the space of all $u\in L^2(\mathbb R^d)$ such that
\[
\operatorname{ess\,supp}\widehat u \subset \widetilde{\Omega}^\ast
\]
and
\[
\|u\|^2_{W^n_{2,\rho}(\mathbb R^d;\widetilde{\Omega}^\ast)}
:=
\int_{\widetilde{\Omega}^\ast}
|\widehat u(\xi)|^2
\widetilde w_n(\xi)\,d\xi
<\infty.
\]
\end{definition}

This space may be viewed as a Sobolev-type space whose Fourier spectrum
is restricted to the union $\Omega^\ast\cup(-\Omega^\ast)$. Such a
spectral restriction is natural in the present setting, since the
Hardy--Sobolev spaces on the tube domains $T_\Omega$ and $T_{-\Omega}$
correspond to Fourier spectra supported in $\Omega^\ast$ and
$-\Omega^\ast$, respectively.

\subsection{Spectral decomposition}

The next step is to decompose the boundary data according to the two
opposite spectral cones $\Omega^\ast$ and $-\Omega^\ast$. This Fourier
decomposition is the boundary counterpart of the holomorphic
Hardy--Sobolev decomposition to be obtained below.

\begin{lemma}[Spectral decomposition]\label{lem:spec-decomp}
Let $u\in W^n_{2,\rho}(\mathbb R^d;\widetilde\Omega^\ast)$, and define
\[
\widehat u_+(\xi):=\chi_{\Omega^\ast}(\xi)\widehat u(\xi),
\qquad
\widehat u_-(\xi):=\chi_{-\Omega^\ast}(\xi)\widehat u(\xi).
\]
Then
\[
u=u_++u_-
\qquad \text{in }L^2(\mathbb R^d),
\]
and
\[
\operatorname{ess\,supp}\widehat u_+\subset\Omega^\ast,
\qquad
\operatorname{ess\,supp}\widehat u_-\subset-\Omega^\ast.
\]
Moreover,
\[
\widehat u_+\in L^2(\Omega^\ast,w_n),
\qquad
\widehat u_-(-\cdot)\in L^2(\Omega^\ast,w_n),
\]
and
\[
\|u\|^2_{W^n_{2,\rho}(\mathbb R^d;\widetilde{\Omega}^\ast)}
=
\int_{\Omega^\ast}|\widehat u_+(\xi)|^2 w_n(\xi)\,d\xi
+
\int_{\Omega^\ast}|\widehat u_-(-\xi)|^2 w_n(\xi)\,d\xi.
\]
\end{lemma}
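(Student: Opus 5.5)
The plan is to work entirely on the Fourier side and use Plancherel to move back to $L^2(\mathbb R^d)$. Since $u\in L^2(\mathbb R^d)$, we have $\widehat u\in L^2(\mathbb R^d)$. The truncations $\widehat u_\pm=\chi_{\pm\Omega^\ast}\widehat u$ are then also in $L^2(\mathbb R^d)$, so we may define $u_\pm\in L^2(\mathbb R^d)$ as their inverse Fourier transforms. The support statements $\operatorname{ess\,supp}\widehat u_\pm\subset\pm\Omega^\ast$ are immediate from the presence of the characteristic functions.

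For the identity $u=u_++u_-$, we use that $\Omega^\ast\cap(-\Omega^\ast)$ is a null set. If $\xi\in\Omega^\ast\cap(-\Omega^\ast)$, then $\langle x,\xi\rangle=0$ for all $x$ in the open set $\Omega$, which forces $\xi=0$. Hence the intersection is $\{0\}$, and $\chi_{\Omega^\ast}+\chi_{-\Omega^\ast}=\chi_{\widetilde\Omega^\ast}$ almost everywhere. Combined with the hypothesis $\operatorname{ess\,supp}\widehat u\subset\widetilde\Omega^\ast$, this gives $\widehat u=\widehat u_++\widehat u_-$ a.e. By linearity and injectivity of the Fourier transform on $L^2$, we get $u=u_++u_-$ in $L^2(\mathbb R^d)$.

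For the norm identity, we split the defining integral of $\|u\|^2_{W^n_{2,\rho}}$ over $\Omega^\ast$ and $-\Omega^\ast$; the overlap is null and contributes nothing. On $\Omega^\ast$ we have $\widetilde w_n=w_n$, which gives the first term $\int_{\Omega^\ast}|\widehat u_+|^2w_n$. On $-\Omega^\ast$ we have $\widetilde w_n(\xi)=w_n(-\xi)$. The change of variables $\xi\mapsto-\xi$ has Jacobian of absolute value one and maps $-\Omega^\ast$ onto $\Omega^\ast$, so this piece becomes $\int_{\Omega^\ast}|\widehat u_-(-\xi)|^2w_n(\xi)\,d\xi$. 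Both terms are bounded by the finite total, which shows $\widehat u_+\in L^2(\Omega^\ast,w_n)$ and $\widehat u_-(-\cdot)\in L^2(\Omega^\ast,w_n)$.

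The only genuinely delicate point is the measure-zero overlap $\Omega^\ast\cap(-\Omega^\ast)=\{0\}$. This is needed both so that $\widetilde w_n$ is well defined up to a null set (at $\xi=0$ the two formulas agree anyway, since $w_n(0)$ and $w_n(-0)$ coincide) and so that the indicator functions add up to $\chi_{\widetilde\Omega^\ast}$ a.e. The paper asserts that the two cones are disjoint, but strictly speaking they share the origin. The argument above, using openness of $\Omega$ and the definition of the dual cone, handles this: the shared point is Lebesgue-null and does not affect any of the identities.
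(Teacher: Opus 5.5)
Your proposal is correct and follows essentially the same route as the paper: split $\widehat u$ using the indicators of $\pm\Omega^\ast$, use injectivity of the Fourier transform on $L^2(\mathbb R^d)$ to get $u=u_++u_-$, and decompose the weighted integral with the change of variables $\xi\mapsto-\xi$. You are also right that $\Omega^\ast\cap(-\Omega^\ast)=\{0\}$ rather than empty as the paper asserts, and your openness argument shows the overlap is Lebesgue-null, which is all the paper's proof actually needs.
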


\begin{proof}
Since
\[
\operatorname{ess\,supp}\widehat u \subset \Omega^\ast\cup(-\Omega^\ast)
\]
and the cones $\Omega^\ast$ and $-\Omega^\ast$ are disjoint, we have
\[
\widehat u = \widehat u_+ + \widehat u_-
\qquad \text{a.e. on }\mathbb R^d.
\]
Hence
\[
u=u_+ + u_-
\qquad \text{in }L^2(\mathbb R^d),
\]
by the injectivity of the Fourier transform on $L^2(\mathbb R^d)$. The
support properties are immediate from the definitions of $\widehat u_+$
and $\widehat u_-$.

Next,
\[
\int_{\Omega^\ast}|\widehat u_+(\xi)|^2 w_n(\xi)\,d\xi
\le
\int_{\widetilde\Omega^\ast}|\widehat u(\xi)|^2 \widetilde w_n(\xi)\,d\xi
<\infty,
\]
so $\widehat u_+\in L^2(\Omega^\ast,w_n)$. Likewise,
\[
\int_{\Omega^\ast}|\widehat u_-(-\xi)|^2 w_n(\xi)\,d\xi
=
\int_{-\Omega^\ast}|\widehat u_-(\eta)|^2 w_n(-\eta)\,d\eta
\le
\int_{\widetilde\Omega^\ast}|\widehat u(\eta)|^2 \widetilde w_n(\eta)\,d\eta
<\infty,
\]
so $\widehat u_-(-\cdot)\in L^2(\Omega^\ast,w_n)$.

Finally, since the two pieces are supported in the disjoint cones
$\Omega^\ast$ and $-\Omega^\ast$, the boundary norm decomposes as
\[
\|u\|^2_{W^n_{2,\rho}(\mathbb R^d;\widetilde{\Omega}^\ast)}
=
\int_{\Omega^\ast}|\widehat u_+(\xi)|^2 w_n(\xi)\,d\xi
+
\int_{-\Omega^\ast}|\widehat u_-(\eta)|^2 w_n(-\eta)\,d\eta.
\]
After the change of variables $\eta=-\xi$ in the second term, this
becomes exactly the asserted identity.
\end{proof}

\subsection{Hardy--Sobolev decomposition}

We now combine the spectral decomposition with the Paley--Wiener
characterization from Section~4 to lift each boundary spectral component
to a unique Hardy--Sobolev function on the corresponding tube domain.
This yields the desired Hardy--Sobolev decomposition and the associated
norm identity.

\begin{theorem}[Hardy--Sobolev decomposition]\label{thm:HS-decomp}
Let $n\in\mathbb N$ and let
\[
u\in W^n_{2,\rho}(\mathbb R^d;\widetilde{\Omega}^\ast).
\]
Let $u_+$ and $u_-$ be the spectral components of $u$ given by
Lemma~\ref{lem:spec-decomp}, that is,
\[
\widehat u_+(\xi)=\chi_{\Omega^\ast}(\xi)\widehat u(\xi),
\qquad
\widehat u_-(\xi)=\chi_{-\Omega^\ast}(\xi)\widehat u(\xi).
\]
Then there exist unique functions
\[
F_+\in H^n_{2,\rho}(T_\Omega),
\qquad
F_-\in H^n_{2,\rho}(T_{-\Omega}),
\]
such that

\begin{enumerate}
\item[(i)] $F_+$ and $F_-$ admit $L^2$ boundary values $u_+$ and $u_-$,
respectively;

\item[(ii)] the boundary decomposition
\[
u=u_+ + u_-
\]
holds in $L^2(\mathbb R^d)$;

\item[(iii)] the norm identity
\[
\|u\|^2_{W^n_{2,\rho}(\mathbb R^d;\widetilde{\Omega}^\ast)}
=
\|F_+\|^2_{H^n_{2,\rho}(T_\Omega)}
+
\|F_-\|^2_{H^n_{2,\rho}(T_{-\Omega})}
\]
is satisfied.
\end{enumerate}
\end{theorem}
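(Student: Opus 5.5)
The plan is to build $F_\pm$ directly from the spectral pieces in Lemma~\ref{lem:spec-decomp} and then check the three assertions. Set
\[
F_+:=\mathcal L(\widehat u_+|_{\Omega^\ast}),\qquad
F_-(z):=\int_{-\Omega^\ast} e^{i\langle z,\eta\rangle}\widehat u_-(\eta)\,d\eta
=\int_{\Omega^\ast} e^{-i\langle z,\xi\rangle}\widehat u_-(-\xi)\,d\xi,\quad z\in T_{-\Omega}.
\]
Lemma~\ref{lem:spec-decomp} gives $\widehat u_+\in L^2(\Omega^\ast,w_n)$, so Theorem~\ref{thm:HS-Fourier-Hilbert} yields $F_+\in H^n_{2,\rho}(T_\Omega)$ with $\|F_+\|=\|\widehat u_+\|_{L^2(\Omega^\ast,w_n)}$.

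For $F_-$, I would use the reflection $z\mapsto -z$, which maps $T_{-\Omega}$ onto $T_\Omega$. The function $\widetilde F(z):=F_-(-z)=\mathcal L(\widehat u_-(-\cdot))(z)$ lies in $H^n_{2,\rho}(T_\Omega)$, because $\widehat u_-(-\cdot)\in L^2(\Omega^\ast,w_n)$. By the definition of the space on the opposite tube (spectrum in $-\Omega^\ast$ with reflected weight $w_n(-\xi)$), this says exactly that $F_-\in H^n_{2,\rho}(T_{-\Omega})$, with norm equal to $\|\widehat u_-(-\cdot)\|_{L^2(\Omega^\ast,w_n)}$. The norm identity (iii) then follows by adding the two norm equalities and comparing with the identity in Lemma~\ref{lem:spec-decomp}. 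Assertion (ii) is already contained in that lemma.

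The main step is (i), the $L^2$ boundary values. By Proposition~\ref{prop4.1}, for $y\in\Omega$ we have $\widehat{(F_+)_y}=e^{-\langle y,\cdot\rangle}\widehat u_+$. Plancherel then gives
\[
\|(F_+)_y-u_+\|_{L^2}^2\asymp\int_{\Omega^\ast}\bigl(1-e^{-\langle y,\xi\rangle}\bigr)^2|\widehat u_+(\xi)|^2\,d\xi .
\]
This tends to $0$ as $y\to0$ in $\Omega$ by dominated convergence, since the integrand is bounded by $|\widehat u_+|^2\in L^1$ and $\langle y,\xi\rangle\to0$ pointwise. Reflecting, the same argument gives $(F_-)_{-y}\to u_-$ in $L^2$ as $y\to0$ in $\Omega$.

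Uniqueness is the point needing some care, since "admits boundary value $u_+$" must pin down $F_+$. Suppose $G\in H^n_{2,\rho}(T_\Omega)$ also has boundary value $u_+$. Theorem~\ref{thm4.2} gives $G=\mathcal L g$ with $\widehat{G_y}=e^{-\langle y,\cdot\rangle}g$. The $L^2$ limit as $y\to0$ of $\widehat{G_y}$ is $g\chi_{\Omega^\ast}$, again by dominated convergence. By Plancherel this limit must equal $\widehat u_+$, so $g=\widehat u_+$ a.e. on $\Omega^\ast$, and injectivity of $\mathcal L$ (Theorem~\ref{thm:HS-Fourier-Hilbert}) gives $G=F_+$. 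The argument for $F_-$ is symmetric.
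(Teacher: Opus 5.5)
Your proposal is correct and follows essentially the same route as the paper: $F_+=\mathcal L\widehat u_+$, $F_-$ obtained by reflecting $\mathcal L(\widehat u_-(-\cdot))$ through $z\mapsto -z$, boundary values via Proposition~\ref{prop4.1} and dominated convergence, and (ii)--(iii) read off from Lemma~\ref{lem:spec-decomp} and the isometry of $\mathcal L$. Your uniqueness step recovers the spectral density of any competitor $G=\mathcal L g$ as the $L^2$ limit of $\widehat{G_y}$ as $y\to 0$. That is a more explicit version of what the paper compresses into a one-line appeal to the uniqueness part of Theorem~\ref{thm4.2}.
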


\begin{proof}
Let $u_+$ and $u_-$ be as in Lemma~\ref{lem:spec-decomp}. Since
\[
\widehat u_+\in L^2(\Omega^\ast,w_n),
\]
the Paley--Wiener characterization established in Section~4 yields a
unique function $F_+\in H^n_{2,\rho}(T_\Omega)$ given by
\[
F_+(z)
=
\int_{\Omega^\ast}
e^{i\langle z,\xi\rangle}\widehat u_+(\xi)\,d\xi,
\qquad z\in T_\Omega.
\]

Similarly, because
\[
\widehat u_-(-\cdot)\in L^2(\Omega^\ast,w_n),
\]
the same characterization produces a unique function
\[
G(z)
=
\int_{\Omega^\ast}
e^{i\langle z,\xi\rangle}\widehat u_-(-\xi)\,d\xi,
\qquad z\in T_\Omega,
\]
belonging to $H^n_{2,\rho}(T_\Omega)$. Define
\[
F_-(z):=G(-z), \qquad z\in T_{-\Omega}.
\]
Since $-z\in T_\Omega$ whenever $z\in T_{-\Omega}$, it follows that
$F_-$ is holomorphic on $T_{-\Omega}$. Equivalently, after the change of
variables $\eta=-\xi$, we obtain the representation
\[
F_-(z)
=
\int_{-\Omega^\ast}
e^{i\langle z,\eta\rangle}\widehat u_-(\eta)\,d\eta,
\qquad z\in T_{-\Omega}.
\]
Thus $F_-\in H^n_{2,\rho}(T_{-\Omega})$.

We now verify the boundary behaviour. Let $y\in\Omega$. By the Fourier
description of boundary translates,
\[
\widehat{F_+(\cdot+iy)}(\xi)
=
e^{-\langle y,\xi\rangle}\widehat u_+(\xi)
\qquad \text{for a.e. }\xi\in\mathbb R^d.
\]
Therefore
\[
\|F_+(\cdot+iy)-u_+\|_{L^2(\mathbb R^d)}^2
=
\int_{\Omega^\ast}
|e^{-\langle y,\xi\rangle}-1|^2
|\widehat u_+(\xi)|^2\,d\xi .
\]
Since
\[
|e^{-\langle y,\xi\rangle}-1|^2 \le 4
\qquad (\xi\in\Omega^\ast),
\]
the integrand is dominated by
\[
4|\widehat u_+(\xi)|^2\in L^1(\Omega^\ast).
\]
Moreover, for each fixed $\xi\in\Omega^\ast$,
\[
e^{-\langle y,\xi\rangle}\to 1
\qquad \text{as } y\to 0 \text{ within }\Omega.
\]
Hence, by the dominated convergence theorem,
\[
F_+(\cdot+iy)\to u_+
\qquad \text{in }L^2(\mathbb R^d).
\]

An analogous argument for $F_-$ shows that
\[
\widehat{F_-(\cdot-iy)}(\eta)
=
e^{\langle y,\eta\rangle}\widehat u_-(\eta)
\qquad \text{for a.e. }\eta\in -\Omega^\ast,
\]
and therefore
\[
\|F_-(\cdot-iy)-u_-\|_{L^2(\mathbb R^d)}^2
=
\int_{-\Omega^\ast}
|e^{\langle y,\eta\rangle}-1|^2
|\widehat u_-(\eta)|^2\,d\eta \to 0
\]
as $y\to 0$ within $\Omega$, again by dominated convergence. Thus
$F_-$ admits $u_-$ as its $L^2$ boundary value on $T_{-\Omega}$.

The decomposition
\[
u=u_+ + u_-
\]
is exactly the conclusion of Lemma~\ref{lem:spec-decomp}.

Finally, by the isometric property of the Fourier--Laplace transform,
\[
\|F_+\|^2_{H^n_{2,\rho}(T_\Omega)}
=
\int_{\Omega^\ast}
|\widehat u_+(\xi)|^2 w_n(\xi)\,d\xi,
\]
and
\[
\|F_-\|^2_{H^n_{2,\rho}(T_{-\Omega})}
=
\int_{\Omega^\ast}
|\widehat u_-(-\xi)|^2 w_n(\xi)\,d\xi .
\]
Combining these identities with the norm formula in
Lemma~\ref{lem:spec-decomp} yields
\[
\|u\|^2_{W^n_{2,\rho}(\mathbb R^d;\widetilde{\Omega}^\ast)}
=
\|F_+\|^2_{H^n_{2,\rho}(T_\Omega)}
+
\|F_-\|^2_{H^n_{2,\rho}(T_{-\Omega})}.
\]

The uniqueness of $F_+$ and $F_-$ follows from the uniqueness part of
the Paley--Wiener characterization, since their boundary spectral
densities are uniquely determined by $\widehat u_+$ and $\widehat u_-$,
respectively. This completes the proof.
\end{proof}

The above decomposition should be contrasted with the orthant setting,
where one may consider all $2^d$ sign orthants in frequency space and
obtain a multi-component Hardy decomposition. In the present convex-cone
framework the geometry of the tube domain naturally leads to the
two-component splitting associated with $\Omega^\ast$ and $-\Omega^\ast$.

\section{Reproducing kernels}

Having established that $H^n_{2,\rho}(T_\Omega)$ is a Hilbert space of
holomorphic functions with bounded point evaluations, we now turn to its
reproducing-kernel structure. The kernel provides a concrete realization
of the Hilbert geometry of the space and will serve as a basic tool in
the subsequent analysis. In this section we derive an explicit kernel
formula from the weighted Fourier--Laplace model and record several of
its fundamental properties.

\subsection{Existence of the reproducing kernel}

We begin by recalling the abstract reproducing-kernel construction that
follows from bounded point evaluations. This provides the notation and
framework for the explicit formula derived next.

By Proposition~\ref{prop:HS-point-evaluation} and the Hilbert structure
established in Theorem~\ref{thm:HS-Fourier-Hilbert}, the point evaluation
functional
\[
\delta_w:F\mapsto F(w)
\]
is bounded on $H^n_{2,\rho}(T_\Omega)$ for every $w\in T_\Omega$.
Hence, by the Riesz representation theorem, there exists a unique
function
\[
K^{(n,\rho)}_w\in H^n_{2,\rho}(T_\Omega)
\]
such that
\[
F(w)=\langle F,K^{(n,\rho)}_w\rangle_{H^n_{2,\rho}(T_\Omega)}
\qquad
\text{for all }F\in H^n_{2,\rho}(T_\Omega).
\]
The reproducing kernel of $H^n_{2,\rho}(T_\Omega)$ is therefore given by
\[
K^{(n,\rho)}(z,w):=K^{(n,\rho)}_w(z),
\qquad z,w\in T_\Omega.
\]

\subsection{Reproducing kernel formula}

We now identify the reproducing kernel explicitly by transporting the
Riesz representer through the weighted Fourier--Laplace realization.

\begin{theorem}[Reproducing kernel formula]\label{thm:kernel-formula}
For $z,w\in T_\Omega$, the reproducing kernel of
$H^n_{2,\rho}(T_\Omega)$ is given by
\[
K^{(n,\rho)}(z,w)
=
\int_{\Omega^\ast}
\frac{e^{\,i\langle z-\overline w,\xi\rangle}}{w_n(\xi)}\,d\xi.
\]
Equivalently,
\[
K^{(n,\rho)}_w(z)
=
\int_{\Omega^\ast}
e^{\,i\langle z,\xi\rangle}
\frac{e^{-i\langle \overline w,\xi\rangle}}{w_n(\xi)}\,d\xi.
\]
\end{theorem}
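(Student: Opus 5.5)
The plan is to locate the Riesz representer $K^{(n,\rho)}_w$ on the Fourier side by means of the isometric isomorphism $\mathcal L:L^2(\Omega^\ast,w_n)\to H^n_{2,\rho}(T_\Omega)$ of Theorem~\ref{thm:HS-Fourier-Hilbert}. Fix $w=u+iv\in T_\Omega$ and set
\[
k_w(\xi):=\frac{e^{-i\langle \overline w,\xi\rangle}}{w_n(\xi)},\qquad \xi\in\Omega^\ast .
\]
Since $|e^{-i\langle\overline w,\xi\rangle}|=e^{-\langle v,\xi\rangle}$, the first step is to check that $k_w\in L^2(\Omega^\ast,w_n)$. We have
\[
\int_{\Omega^\ast}|k_w|^2w_n\,d\xi=\int_{\Omega^\ast}e^{-2\langle v,\xi\rangle}w_n(\xi)^{-1}\,d\xi\le\int_{\Omega^\ast}e^{-2c_v|\xi|}\,d\xi<\infty,
\]
using $w_n\ge1$ and the cone estimate $\langle v,\xi\rangle\ge c_v|\xi|$ already employed in the proof of Theorem~\ref{thm:HS-Fourier-Hilbert}. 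Consequently $\mathcal Lk_w\in H^n_{2,\rho}(T_\Omega)$, and $\mathcal Lk_w(z)$ is exactly the second displayed integral in the statement.

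The second step verifies the reproducing property. Take $F=\mathcal Lf$ with $f\in L^2(\Omega^\ast,w_n)$. By the transported inner product,
\[
\langle F,\mathcal Lk_w\rangle_{H^n_{2,\rho}(T_\Omega)}=\int_{\Omega^\ast}f(\xi)\overline{k_w(\xi)}\,w_n(\xi)\,d\xi=\int_{\Omega^\ast}f(\xi)e^{i\langle w,\xi\rangle}\,d\xi=F(w).
\]
Here we used $\overline{e^{-i\langle\overline w,\xi\rangle}}=e^{i\langle w,\xi\rangle}$ (since $\xi$ is real) and the cancellation of $w_n$. The integral converges absolutely by Cauchy--Schwarz, since both $f$ and $k_w$ lie in $L^2(\Omega^\ast,w_n)$. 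By uniqueness in the Riesz representation theorem, $K^{(n,\rho)}_w=\mathcal Lk_w$.

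Finally, evaluating at $z$ gives
\[
K^{(n,\rho)}(z,w)=\int_{\Omega^\ast}e^{i\langle z-\overline w,\xi\rangle}w_n(\xi)^{-1}\,d\xi.
\]
This integral converges absolutely because $\Im(z-\overline w)=\Im z+\Im w\in\Omega$, as $\Omega$ is a convex cone. There is no serious obstacle here. The only point requiring care is the conjugation bookkeeping, namely that the conjugate-linearity in the second slot produces $e^{i\langle w,\xi\rangle}$ rather than $e^{-i\langle\overline w,\xi\rangle}$; the integrability of $k_w$ against the weight is the only analytic estimate needed.
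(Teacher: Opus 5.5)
Your proposal is correct and follows the paper's proof essentially step for step. Like the paper, you take $k_w(\xi)=e^{-i\langle\overline w,\xi\rangle}/w_n(\xi)$, show $k_w\in L^2(\Omega^\ast,w_n)$ via the cone estimate $\langle \Im w,\xi\rangle\ge c_w|\xi|$, verify $\langle F,\mathcal L k_w\rangle=F(w)$ through the transported inner product, and conclude $K^{(n,\rho)}_w=\mathcal L k_w$ by uniqueness of the Riesz representer.
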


\begin{proof}
Fix $w\in T_\Omega$ and define
\[
k_w(\xi):=\frac{e^{-i\langle \overline w,\xi\rangle}}{w_n(\xi)},
\qquad \xi\in\Omega^\ast.
\]
We first show that $k_w\in L^2(\Omega^\ast,w_n)$. Since $\Im w\in\Omega$,
there exists a constant $c_w>0$ such that
\[
\langle \Im w,\xi\rangle \ge c_w |\xi|,
\qquad \xi\in\Omega^\ast.
\]
Hence
\[
\int_{\Omega^\ast}|k_w(\xi)|^2w_n(\xi)\,d\xi
=
\int_{\Omega^\ast}
\frac{e^{-2\langle \Im w,\xi\rangle}}{w_n(\xi)}\,d\xi
\le
\int_{\Omega^\ast} e^{-2c_w|\xi|}\,d\xi
<\infty.
\]
Therefore $k_w\in L^2(\Omega^\ast,w_n)$.

Let $F=\mathcal{L}f\in H^n_{2,\rho}(T_\Omega)$ with
$f\in L^2(\Omega^\ast,w_n)$. Then
\[
F(w)
=
\int_{\Omega^\ast} e^{\,i\langle w,\xi\rangle}f(\xi)\,d\xi.
\]
On the other hand, by the transported inner product,
\[
\langle F,\mathcal{L}k_w\rangle_{H^n_{2,\rho}(T_\Omega)}
=
\int_{\Omega^\ast} f(\xi)\overline{k_w(\xi)}\,w_n(\xi)\,d\xi.
\]
Since
\[
\overline{k_w(\xi)}\,w_n(\xi)=e^{\,i\langle w,\xi\rangle},
\]
it follows that
\[
\langle F,\mathcal{L}k_w\rangle_{H^n_{2,\rho}(T_\Omega)}=F(w).
\]
By uniqueness in the Riesz representation theorem, we conclude that
\[
K^{(n,\rho)}_w=\mathcal{L}k_w.
\]
Therefore,
\[
K^{(n,\rho)}_w(z)
=
\int_{\Omega^\ast}
e^{\,i\langle z,\xi\rangle}
\frac{e^{-i\langle \overline w,\xi\rangle}}{w_n(\xi)}\,d\xi,
\]
and hence
\[
K^{(n,\rho)}(z,w)
=
\int_{\Omega^\ast}
\frac{e^{\,i\langle z-\overline w,\xi\rangle}}{w_n(\xi)}\,d\xi.
\]
This completes the proof.
\end{proof}

The explicit formula shows that the kernel depends only on the
combination $z-\overline w$, reflecting the translation-invariant
geometry of the tube domain together with the weighted Fourier--Laplace
realization of the space.

\subsection{Basic properties}

The explicit kernel formula immediately yields several structural
properties that reflect both the Hilbert-space symmetry and the
translation-invariant geometry of the tube domain.

\begin{proposition}\label{prop:kernel-basic}
For all $z,w\in T_\Omega$, the reproducing kernel
$K^{(n,\rho)}(z,w)$ satisfies the following properties:

\begin{enumerate}
\item \emph{Sesqui-holomorphy}: the function
\[
(z,w)\mapsto K^{(n,\rho)}(z,w)
\]
is holomorphic in $z$ and anti-holomorphic in $w$ on
$T_\Omega\times T_\Omega$.

\item \emph{Hermitian symmetry}:
\[
K^{(n,\rho)}(z,w)=\overline{K^{(n,\rho)}(w,z)}.
\]

\item \emph{Translation invariance in the real direction}: for every
$x\in\mathbb R^d$,
\[
K^{(n,\rho)}(z+x,w+x)=K^{(n,\rho)}(z,w).
\]

\item \emph{Diagonal formula}: if $z=x+iy\in T_\Omega$, then
\[
K^{(n,\rho)}(z,z)
=
\int_{\Omega^\ast}
\frac{e^{-2\langle y,\xi\rangle}}{w_n(\xi)}\,d\xi.
\]

\item \emph{Positivity on the diagonal}:
\[
K^{(n,\rho)}(z,z)>0.
\]
\end{enumerate}
\end{proposition}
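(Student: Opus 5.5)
All five properties will be read off from the explicit kernel formula of Theorem~\ref{thm:kernel-formula},
\[
K^{(n,\rho)}(z,w)=\int_{\Omega^\ast}\frac{e^{i\langle z-\overline w,\xi\rangle}}{w_n(\xi)}\,d\xi .
\]
The only analytic input is the cone estimate already used repeatedly in Section~3: for every compact set $K\subset T_\Omega$ there is $c_K>0$ such that $\langle \Im z,\xi\rangle\ge c_K|\xi|$ for all $z\in K$ and $\xi\in\Omega^\ast$. Since $\Im(z-\overline w)=\Im z+\Im w$ and $w_n\ge1$, the modulus of the integrand is at most $e^{-(c_{K_1}+c_{K_2})|\xi|}$ when $z\in K_1$ and $w\in K_2$. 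This bound is uniform and integrable on $\Omega^\ast$.

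\emph{Sesqui-holomorphy.} For fixed $w$, the function $z\mapsto K^{(n,\rho)}(z,w)$ equals $\mathcal L k_w$, which is holomorphic by Theorem~\ref{thm:HS-Fourier-Hilbert}. For anti-holomorphy in $w$, observe that the integrand depends on $w$ only through $\overline w$, and it is entire in $\overline w$. I will differentiate under the integral sign with respect to $\overline w_j$ on compact sets; this brings down a factor $-i\xi_j$. The domination $|\xi|e^{-c|\xi|}\in L^1(\Omega^\ast)$ justifies the differentiation, exactly as in the proof of Theorem~\ref{thm:HS-Fourier-Hilbert}.

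Joint sesqui-holomorphy requires slightly more care. I will use the same domination argument in both variables jointly: the map $(z,\overline w)\mapsto K$ is a parameter integral of an entire integrand with a locally uniform $L^1$ majorant. It is therefore continuous, and by Hartogs (or directly by Morera/Osgood) it is holomorphic in $(z,\overline w)$. This joint step is the main, though mild, obstacle; the rest is mechanical.

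\emph{Hermitian symmetry and translation invariance.} Conjugating the integrand and using that $w_n$ is real gives
\[
\overline{K^{(n,\rho)}(w,z)}=\int_{\Omega^\ast} e^{-i\langle \overline{w-\overline z},\xi\rangle}\,w_n^{-1}\,d\xi .
\]
Since $-\overline{(w-\overline z)}=z-\overline w$, the exponent is exactly that of $K^{(n,\rho)}(z,w)$, which proves symmetry. Alternatively, symmetry follows abstractly from $K(z,w)=\langle K_w,K_z\rangle$. For translation invariance, take $x\in\mathbb R^d$: then $\overline{w+x}=\overline w+x$, so $(z+x)-\overline{(w+x)}=z-\overline w$, and the integrand is unchanged.

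\emph{Diagonal formula and positivity.} Setting $w=z=x+iy$ gives $z-\overline z=2iy$, so the integrand becomes $e^{-2\langle y,\xi\rangle}/w_n(\xi)$, which is the diagonal formula. This integrand is strictly positive and continuous on $\Omega^\ast$. Moreover, $\Omega^\ast$ has positive Lebesgue measure: because $\Omega$ contains no line, $\Omega^\ast$ has nonempty interior. Hence the integral is strictly positive. If one prefers to avoid the convex-geometry fact, positivity also follows from $K(z,z)=\|K_z\|^2$ together with $K_z\neq0$, since $\mathcal L k_z\ne0$ by the injectivity in Theorem~\ref{thm:HS-Fourier-Hilbert}, where $k_z\neq0$ in $L^2(\Omega^\ast,w_n)$. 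This holds as long as $\Omega^\ast$ is not a null set, which I will justify by the interior-point remark above.
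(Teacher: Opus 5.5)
Your proposal is correct and follows essentially the same route as the paper: every property is read off from the explicit kernel formula, holomorphy comes from the locally uniform integrable bound $e^{-c_K|\xi|}$ on the integrand, and positivity comes from a strictly positive integrand on a set of positive measure. Two of your additions make the argument more careful than the paper's. First, you justify that $\Omega^\ast$ has nonempty interior because $\Omega$ contains no line, which the paper merely asserts. Second, your separate-then-joint (Osgood/Hartogs) treatment of sesqui-holomorphy spells out the domination argument that the paper leaves implicit.
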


\begin{proof}
(1) Let $K\subset T_\Omega\times T_\Omega$ be compact. Since
$\Im z,\Im w\in\Omega$ for all $(z,w)\in K$, there exists a constant
$c_K>0$ such that
\[
\langle \Im z+\Im w,\xi\rangle \ge c_K|\xi|,
\qquad (z,w)\in K,\ \xi\in\Omega^\ast.
\]
Hence
\[
\left|
\frac{e^{\,i\langle z-\overline w,\xi\rangle}}{w_n(\xi)}
\right|
=
\frac{e^{-\langle \Im z+\Im w,\xi\rangle}}{w_n(\xi)}
\le
e^{-c_K|\xi|},
\]
which is integrable on $\Omega^\ast$. Therefore the kernel formula in
Theorem~\ref{thm:kernel-formula} defines a function that is holomorphic
in $z$ and anti-holomorphic in $w$ on $T_\Omega\times T_\Omega$.

(2) By the kernel formula,
\[
\overline{K^{(n,\rho)}(w,z)}
=
\overline{
\int_{\Omega^\ast}
\frac{e^{\,i\langle w-\overline z,\xi\rangle}}{w_n(\xi)}\,d\xi
}
=
\int_{\Omega^\ast}
\frac{e^{\,i\langle z-\overline w,\xi\rangle}}{w_n(\xi)}\,d\xi
=
K^{(n,\rho)}(z,w).
\]

(3) For $x\in\mathbb R^d$,
\[
K^{(n,\rho)}(z+x,w+x)
=
\int_{\Omega^\ast}
\frac{e^{\,i\langle z+x-\overline{(w+x)},\xi\rangle}}{w_n(\xi)}\,d\xi
=
\int_{\Omega^\ast}
\frac{e^{\,i\langle z-\overline w,\xi\rangle}}{w_n(\xi)}\,d\xi
=
K^{(n,\rho)}(z,w).
\]

(4) This is the special case $w=z$ of the kernel formula in
Theorem~\ref{thm:kernel-formula}.

(5) By (iv),
\[
K^{(n,\rho)}(z,z)
=
\int_{\Omega^\ast}
\frac{e^{-2\langle \Im z,\xi\rangle}}{w_n(\xi)}\,d\xi.
\]
The integrand is strictly positive on $\Omega^\ast$, and $\Omega^\ast$
has positive measure. Hence
\[
K^{(n,\rho)}(z,z)>0.
\]
\end{proof}

\subsection{Derivative kernels}

The integral representation of the reproducing kernel is stable under
differentiation. Consequently, one obtains explicit formulas for mixed
derivatives of the kernel, which reflect the same weighted
Fourier--Laplace structure that defines the space itself.

\begin{proposition}\label{prop:kernel-derivatives}
Let $\alpha,\beta$ be multi-indices. Then for every $z,w\in T_\Omega$,
\[
\partial_z^\alpha \partial_{\overline w}^{\,\beta}
K^{(n,\rho)}(z,w)
=
\int_{\Omega^\ast}
(i\xi)^\alpha(-i\xi)^\beta
\frac{e^{\,i\langle z-\overline w,\xi\rangle}}{w_n(\xi)}\,d\xi.
\]
Moreover, these mixed derivatives are jointly continuous on
$T_\Omega\times T_\Omega$.
\end{proposition}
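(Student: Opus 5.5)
The plan is to differentiate under the integral sign in the kernel formula of Theorem~\ref{thm:kernel-formula}, using the same uniform exponential domination on compact sets that already appeared in the proofs of Theorem~\ref{thm:HS-Fourier-Hilbert} and Proposition~\ref{prop:kernel-basic}(1). Write $z=x+iy$, $w=u+iv$. The integrand
\[
\Phi(z,w,\xi)=\frac{e^{\,i\langle z-\overline w,\xi\rangle}}{w_n(\xi)}
\]
is, for fixed $\xi$, entire in $z$ and anti-holomorphic in $w$. The pointwise derivatives are
\[
\partial_{z_j}\Phi=i\xi_j\Phi,
\qquad
\partial_{\overline w_k}\Phi=-i\xi_k\Phi,
\]
since $\langle z-\overline w,\xi\rangle$ depends on $\overline w$ through $-\overline w_k\xi_k$. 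By induction, the pointwise mixed derivative is $(i\xi)^\alpha(-i\xi)^\beta\Phi$. This gives the displayed formula, provided the interchange of differentiation and integration is justified.

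For the interchange I fix a compact set $K\subset T_\Omega\times T_\Omega$. The projections $\{\Im z\}$ and $\{\Im w\}$ are compact subsets of the open cone $\Omega$, and the set $\{y+v:(z,w)\in K\}$ is a compact subset of $\Omega$, because $\Omega$ is a convex cone and hence closed under addition. As in Proposition~\ref{prop:kernel-basic}(1), there is $c_K>0$ with
\[
\langle y+v,\xi\rangle\ge c_K|\xi|
\qquad\text{for all }(z,w)\in K,\ \xi\in\Omega^\ast.
\]
For every pair of multi-indices $\alpha,\beta$ this yields the domination
\[
\bigl|(i\xi)^\alpha(-i\xi)^\beta\Phi\bigr|\le |\xi|^{|\alpha|+|\beta|}e^{-c_K|\xi|},
\]
using $w_n\ge1$. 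The right-hand side is integrable on $\Omega^\ast$ and independent of $(z,w)\in K$.

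The standard theorem on differentiating parameter integrals (applied one derivative at a time, in real coordinates of $z$ and $w$, with the Cauchy--Riemann combinations $\partial_z=\tfrac12(\partial_x-i\partial_y)$ and similarly for $\overline w$) then justifies each successive differentiation. Equivalently, one may note that the integral is holomorphic in $(z,\overline w)$ locally uniformly, as in Proposition~\ref{prop:kernel-basic}(1), and differentiate inside. Joint continuity of each mixed derivative follows from the dominated convergence theorem: the integrand is jointly continuous in $(z,w)$ for each fixed $\xi$, and the domination above is uniform on a compact neighbourhood of any point.

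The only step requiring care is the uniform lower bound $\langle y+v,\xi\rangle\ge c_K|\xi|$ on compacta. It follows from continuity and positivity of $(\eta,\xi)\mapsto\langle\eta,\xi\rangle$ on the compact set $\{y+v\}\times(\Omega^\ast\cap S^{d-1})$. Positivity holds because every $\eta\in\Omega$ pairs strictly positively with every nonzero $\xi\in\Omega^\ast$, since $\Omega$ is open. Everything else is routine.
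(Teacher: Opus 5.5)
Your proposal is correct and follows essentially the same route as the paper. You differentiate under the integral sign in the kernel formula of Theorem~\ref{thm:kernel-formula}, justify this by the uniform bound $\langle \Im z+\Im w,\xi\rangle\ge c_K|\xi|$ on compacta together with $w_n\ge 1$, and get joint continuity from the same domination; your compactness argument on $\{y+v\}\times(\Omega^\ast\cap S^{d-1})$ supplies the justification of $c_K$ that the paper leaves implicit.
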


\begin{proof}
Since differentiability and continuity are local questions on
$T_\Omega\times T_\Omega$, it suffices to work on an arbitrary compact
subset $K\subset T_\Omega\times T_\Omega$. Since $\Im z,\Im w\in\Omega$
for all $(z,w)\in K$, there exists a constant $c_K>0$ such that
\[
\langle \Im z+\Im w,\xi\rangle \ge c_K|\xi|,
\qquad (z,w)\in K,\ \xi\in\Omega^\ast.
\]
Hence for every pair of multi-indices $\alpha,\beta$,
\[
\left|
(i\xi)^\alpha(-i\xi)^\beta
\frac{e^{\,i\langle z-\overline w,\xi\rangle}}{w_n(\xi)}
\right|
\le
C_{K,\alpha,\beta}(1+|\xi|)^{|\alpha|+|\beta|}e^{-c_K|\xi|},
\]
where we used $w_n(\xi)\ge 1$. The right-hand side is integrable on
$\Omega^\ast$. Therefore differentiation under the integral sign is
justified, and
\[
\partial_z^\alpha \partial_{\overline w}^{\,\beta}
K^{(n,\rho)}(z,w)
=
\int_{\Omega^\ast}
(i\xi)^\alpha(-i\xi)^\beta
\frac{e^{\,i\langle z-\overline w,\xi\rangle}}{w_n(\xi)}\,d\xi
\]
for all $(z,w)\in T_\Omega\times T_\Omega$. The same domination
argument on compact subsets yields the joint continuity of these mixed
derivatives.
\end{proof}

The explicit reproducing  kernel and derivative-kernel formulas show that the
reproducing structure of $H^n_{2,\rho}(T_\Omega)$ is governed by the
same weighted Fourier--Laplace representation of the space. In
particular, the cone geometry enters through the exponential factor
$e^{-\langle \Im z+\Im w,\xi\rangle}$, while the Hardy--Sobolev
regularity appears through  the weight $w_n(\xi)^{-1}$. These formulas
provide a natural starting point for further kernel estimates and
operator-theoretic applications.

\section{Carleson measures}

The reproducing-kernel representation established in Section~6 provides
the natural framework for formulating the Carleson embedding problem for
$H^n_{2,\rho}(T_\Omega)$. In this section we introduce Carleson
measures for the Hardy--Sobolev space, recast the embedding condition in
operator-theoretic and Fourier--Laplace terms, and derive the natural
reproducing-kernel testing criterion. These results prepare the ground
for the operator-theoretic applications developed in Section~8.

We begin with the natural Carleson embedding condition for
$H^n_{2,\rho}(T_\Omega)$ and record its immediate interpretation as a
bounded inclusion into $L^2(T_\Omega,\mu)$.

\begin{definition}
Let $\mu$ be a positive Borel measure on $T_\Omega$.
We say that $\mu$ is a \emph{Carleson measure} for
$H^n_{2,\rho}(T_\Omega)$ if there exists a constant $C>0$ such that
\[
\int_{T_\Omega}|F(z)|^2\,d\mu(z)
\le
C\|F\|^2_{H^n_{2,\rho}(T_\Omega)}
\]
for all $F\in H^n_{2,\rho}(T_\Omega)$.
\end{definition}

Equivalently, $\mu$ is a Carleson measure if and only if the canonical
embedding
\[
J_\mu:H^n_{2,\rho}(T_\Omega)\longrightarrow L^2(T_\Omega,\mu),
\qquad
J_\mu(F)=F,
\]
is bounded.

\subsection{Operator-theoretic formulation}

The Carleson inequality may be expressed equivalently in terms of a
bounded sesquilinear form and the associated positive operator on
$H^n_{2,\rho}(T_\Omega)$. This is the operator-theoretic viewpoint that
will be used later.

Associated with $\mu$ is the sesquilinear form
\[
B_\mu(F,G)
:=
\int_{T_\Omega}F(z)\overline{G(z)}\,d\mu(z),
\qquad
F,G\in H^n_{2,\rho}(T_\Omega).
\]

\begin{proposition}\label{prop:carleson-operator}
Let $\mu$ be a positive Borel measure on $T_\Omega$.
The following conditions are equivalent.

\begin{enumerate}
\item[(i)] $\mu$ is a Carleson measure for $H^n_{2,\rho}(T_\Omega)$;

\item[(ii)] the embedding
\[
J_\mu:H^n_{2,\rho}(T_\Omega)\to L^2(T_\Omega,\mu)
\]
is bounded;

\item[(iii)] the sesquilinear form $B_\mu$ is bounded on
$H^n_{2,\rho}(T_\Omega)\times H^n_{2,\rho}(T_\Omega)$;

\item[(iv)] there exists a unique bounded positive operator
\[
T_\mu:H^n_{2,\rho}(T_\Omega)\to H^n_{2,\rho}(T_\Omega)
\]
such that
\[
\langle T_\mu F,G\rangle_{H^n_{2,\rho}(T_\Omega)}
=
B_\mu(F,G)
\]
for all $F,G\in H^n_{2,\rho}(T_\Omega)$.
\end{enumerate}
\end{proposition}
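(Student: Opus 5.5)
The plan is to prove the cycle (i)$\Leftrightarrow$(ii), (ii)$\Rightarrow$(iii)$\Rightarrow$(iv)$\Rightarrow$(i). The equivalence (i)$\Leftrightarrow$(ii) is a restatement. Every $F\in H^n_{2,\rho}(T_\Omega)$ is holomorphic, hence Borel measurable. The Carleson inequality reads $\|J_\mu F\|^2_{L^2(\mu)}\le C\|F\|^2$, so $J_\mu$ is well defined and bounded with $\|J_\mu\|\le C^{1/2}$. Conversely, boundedness of $J_\mu$ gives the inequality with $C=\|J_\mu\|^2$.

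For (ii)$\Rightarrow$(iii), write $B_\mu(F,G)=\langle J_\mu F,J_\mu G\rangle_{L^2(\mu)}$. Cauchy--Schwarz in $L^2(T_\Omega,\mu)$ shows that the integral defining $B_\mu$ converges absolutely and that
\[
|B_\mu(F,G)|\le \|J_\mu\|^2\|F\|\,\|G\|.
\]
For (iii)$\Rightarrow$(iv), we use the standard representation of bounded sesquilinear forms on a Hilbert space. For fixed $F$, the map $G\mapsto\overline{B_\mu(F,G)}$ is a bounded linear functional. This is where Theorem~\ref{thm:HS-Fourier-Hilbert} enters: it guarantees that $H^n_{2,\rho}(T_\Omega)$ is complete. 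The Riesz representation theorem then gives a unique $T_\mu F$ with $\langle T_\mu F,G\rangle=B_\mu(F,G)$. Linearity of $F\mapsto T_\mu F$ follows from uniqueness, and taking $G=T_\mu F$ gives $\|T_\mu\|\le\|B_\mu\|$. Positivity holds because $\langle T_\mu F,F\rangle=\int|F|^2d\mu\ge0$. This also makes $T_\mu$ self-adjoint, since a bounded operator on a complex Hilbert space with real quadratic form is self-adjoint (alternatively, from $B_\mu(F,G)=\overline{B_\mu(G,F)}$). Uniqueness of $T_\mu$ holds because an operator is determined by its matrix coefficients $\langle T_\mu F,G\rangle$.

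For (iv)$\Rightarrow$(i), set $G=F$ to get
\[
\int_{T_\Omega}|F|^2\,d\mu=\langle T_\mu F,F\rangle\le\|T_\mu\|\,\|F\|^2,
\]
which is the Carleson inequality with $C=\|T_\mu\|$.

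The only delicate point is a matter of interpretation. Statements (iii) and (iv) presuppose that $B_\mu(F,G)$ is defined for all $F,G$. We will read (iii) as requiring that $\int|F\overline G|\,d\mu<\infty$ and that the bound holds, and (iv) similarly. With that reading, specializing to $F=G$ is legitimate and the cycle closes. No kernel machinery from Section~6 is needed here. The only structural input is completeness, which comes from Theorem~\ref{thm:HS-Fourier-Hilbert}.
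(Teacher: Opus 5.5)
Your proof is correct and follows the same route as the paper: (i)$\Leftrightarrow$(ii) by definition, the identity $B_\mu(F,G)=\langle J_\mu F,J_\mu G\rangle_{L^2(T_\Omega,\mu)}$ combined with Cauchy--Schwarz, and the Riesz representation of bounded sesquilinear forms, with positivity coming from $B_\mu(F,F)=\int_{T_\Omega}|F|^2\,d\mu\ge 0$. Your cyclic arrangement, the explicit appeal to completeness from Theorem~\ref{thm:HS-Fourier-Hilbert}, and your remark on when $B_\mu$ is well defined only spell out what the paper's terse proof leaves implicit.
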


\begin{proof}
The equivalence of \textup{(i)} and \textup{(ii)} is immediate from the
definition of the Carleson inequality. The equivalence of \textup{(ii)}
and \textup{(iii)} follows from the identity
\[
B_\mu(F,G)
=
\langle J_\mu F,J_\mu G\rangle_{L^2(T_\Omega,\mu)}.
\]
Finally, the equivalence of \textup{(iii)} and \textup{(iv)} follows
from the Riesz representation theorem for bounded sesquilinear forms on
Hilbert spaces. Moreover, since $\mu$ is positive, one has
\[
B_\mu(F,F)
=
\int_{T_\Omega}|F(z)|^2\,d\mu(z)\ge 0,
\]
so the representing operator $T_\mu$ is positive.
\end{proof}

\subsection{Fourier--Laplace formulation}

Since $H^n_{2,\rho}(T_\Omega)$ is realized isometrically through the
Fourier--Laplace transform, the Carleson embedding problem can also be
transferred to the weighted Fourier side.

Let
\[
\mathcal L:L^2(\Omega^\ast,w_n)\longrightarrow H^n_{2,\rho}(T_\Omega)
\]
denote the Fourier--Laplace transform introduced in Section~3.

\begin{theorem}\label{thm:carleson-fourier}
Let $\mu$ be a positive Borel measure on $T_\Omega$.
Then $\mu$ is a Carleson measure for $H^n_{2,\rho}(T_\Omega)$
if and only if there exists a constant $C>0$ such that
\[
\int_{T_\Omega}|(\mathcal L f)(z)|^2\,d\mu(z)
\le
C\|f\|^2_{L^2(\Omega^\ast,w_n)}
\]
for all $f\in L^2(\Omega^\ast,w_n)$.
\end{theorem}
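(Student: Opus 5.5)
The plan is to transport the Carleson inequality through the isometric isomorphism of Theorem~\ref{thm:HS-Fourier-Hilbert}; no analysis beyond that theorem should be needed.

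For the forward direction, assume $\mu$ is a Carleson measure with constant $C$. Given $f\in L^2(\Omega^\ast,w_n)$, set $F:=\mathcal L f$. By Theorem~\ref{thm:HS-Fourier-Hilbert}, $F\in H^n_{2,\rho}(T_\Omega)$ and
\[
\|F\|_{H^n_{2,\rho}(T_\Omega)}=\|f\|_{L^2(\Omega^\ast,w_n)}.
\]
Inserting $F$ into the Carleson inequality gives
\[
\int_{T_\Omega}|(\mathcal L f)(z)|^2\,d\mu(z)\le C\|f\|^2_{L^2(\Omega^\ast,w_n)},
\]
which is the Fourier-side inequality with the same constant.

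For the converse, assume the Fourier-side inequality holds and let $F\in H^n_{2,\rho}(T_\Omega)$. Surjectivity of $\mathcal L$ onto $H^n_{2,\rho}(T_\Omega)$, or equivalently Theorem~\ref{thm4.2}(ii), gives a $f\in L^2(\Omega^\ast,w_n)$ with $F=\mathcal L f$. Injectivity makes this $f$ unique, so $\|F\|_{H^n_{2,\rho}(T_\Omega)}=\|f\|_{L^2(\Omega^\ast,w_n)}$ holds exactly, not only as an infimum. Substituting into the assumed inequality yields the Carleson inequality for $F$.

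The only point needing care is whether the integral $\int_{T_\Omega}|F|^2\,d\mu$ makes sense. Since $F$ is holomorphic, hence continuous, $|F|^2$ is a nonnegative Borel function, so the integral is well defined in $[0,\infty]$ and the inequalities are meaningful. I expect no genuine obstacle. The whole content is the identification of the norm as a transported norm, which rests on the injectivity part of Theorem~\ref{thm:HS-Fourier-Hilbert}.

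Equivalently, the statement says that $J_\mu\circ\mathcal L$ is bounded from $L^2(\Omega^\ast,w_n)$ to $L^2(T_\Omega,\mu)$ exactly when $J_\mu$ is bounded, because $\mathcal L$ is unitary onto $H^n_{2,\rho}(T_\Omega)$. Moreover $\|J_\mu\|=\|J_\mu\mathcal L\|$, so the optimal constants coincide, which is consistent with Proposition~\ref{prop:carleson-operator}.
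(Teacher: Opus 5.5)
Your proposal is correct and is essentially the paper's proof: both directions come from transporting the Carleson inequality through the isometric isomorphism $\mathcal L: L^2(\Omega^\ast,w_n)\to H^n_{2,\rho}(T_\Omega)$ of Theorem~\ref{thm:HS-Fourier-Hilbert}. Your extra remarks on the measurability of $|F|^2$ and on the equality of the optimal constants are valid details that the paper leaves implicit.
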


\begin{proof}
Since
\[
\mathcal L:L^2(\Omega^\ast,w_n)\to H^n_{2,\rho}(T_\Omega)
\]
is an isometric isomorphism, the Carleson inequality on
$H^n_{2,\rho}(T_\Omega)$ holds for all $F\in H^n_{2,\rho}(T_\Omega)$ if
and only if it holds for all $F=\mathcal L f$ with
$f\in L^2(\Omega^\ast,w_n)$. This is exactly the stated condition.
\end{proof}

\subsection{Kernel testing condition}

The reproducing kernels obtained in Section~6 yield an immediate
necessary condition for Carleson measures. This is the natural kernel
test associated with the embedding inequality.

\begin{proposition}[Kernel test]\label{prop:kernel-test}
Let $\mu$ be a Carleson measure for $H^n_{2,\rho}(T_\Omega)$.
Then there exists a constant $C>0$ such that
\[
\int_{T_\Omega}|K^{(n,\rho)}_w(z)|^2\,d\mu(z)
\le
C\,K^{(n,\rho)}(w,w)
\]
for all $w\in T_\Omega$.
\end{proposition}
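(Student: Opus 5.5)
The plan is to test the Carleson inequality on the reproducing kernels themselves. Fix $w\in T_\Omega$. By Theorem~\ref{thm:kernel-formula}, $K^{(n,\rho)}_w=\mathcal L k_w$ with $k_w(\xi)=e^{-i\langle \overline w,\xi\rangle}/w_n(\xi)$. The proof of that theorem shows $k_w\in L^2(\Omega^\ast,w_n)$, so $K^{(n,\rho)}_w$ belongs to $H^n_{2,\rho}(T_\Omega)$ and is an admissible test function in the definition of a Carleson measure.

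Applying the Carleson inequality with $F=K^{(n,\rho)}_w$ gives
\[
\int_{T_\Omega}|K^{(n,\rho)}_w(z)|^2\,d\mu(z)\le C\,\|K^{(n,\rho)}_w\|^2_{H^n_{2,\rho}(T_\Omega)},
\]
where $C$ is the Carleson constant of $\mu$ and does not depend on $w$. It remains to compute the norm of the kernel.

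By the reproducing property with $F=K^{(n,\rho)}_w$,
\[
\|K^{(n,\rho)}_w\|^2=\langle K^{(n,\rho)}_w,K^{(n,\rho)}_w\rangle=K^{(n,\rho)}_w(w)=K^{(n,\rho)}(w,w).
\]
As a cross-check, the isometry of Theorem~\ref{thm:HS-Fourier-Hilbert} gives directly
\[
\|k_w\|^2_{L^2(\Omega^\ast,w_n)}=\int_{\Omega^\ast}\frac{e^{-2\langle \Im w,\xi\rangle}}{w_n(\xi)}\,d\xi,
\]
which agrees with the diagonal formula in Proposition~\ref{prop:kernel-basic}. Substituting this value into the previous inequality yields the claim with the same constant $C$, uniformly in $w$.

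There is no real obstacle here. The only point to verify is that the kernel lies in the space, and this was already established in the proof of Theorem~\ref{thm:kernel-formula}.
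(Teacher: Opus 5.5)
Your proposal is correct and matches the paper's proof: you test the Carleson inequality on $F=K^{(n,\rho)}_w$ and use $\|K^{(n,\rho)}_w\|^2_{H^n_{2,\rho}(T_\Omega)}=K^{(n,\rho)}(w,w)$, with the same $w$-independent constant $C$. Your check that the kernel lies in the space (via $k_w\in L^2(\Omega^\ast,w_n)$) and the cross-check through the isometry are valid, though membership already follows from the Riesz representation.
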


\begin{proof}
Applying the Carleson inequality to
\[
F=K^{(n,\rho)}_w
\]
gives
\[
\int_{T_\Omega}|K^{(n,\rho)}_w(z)|^2\,d\mu(z)
\le
C\|K^{(n,\rho)}_w\|^2_{H^n_{2,\rho}(T_\Omega)}.
\]
Since
\[
\|K^{(n,\rho)}_w\|^2_{H^n_{2,\rho}(T_\Omega)}
=
K^{(n,\rho)}(w,w),
\]
the asserted inequality follows.
\end{proof}

The diagonal kernel formula obtained in Section~6 shows that for
$w=x+iy\in T_\Omega$,
\[
K^{(n,\rho)}(w,w)
=
\int_{\Omega^\ast}
\frac{e^{-2\langle y,\xi\rangle}}{w_n(\xi)}\,d\xi.
\]
Thus the kernel test is governed by the decay of the exponential factor
along the dual cone together with the Hardy--Sobolev weight
$w_n(\xi)$.

In the setting of general tube domains over convex cones, a simple
geometric characterization of Carleson measures comparable to the
classical Carleson-box condition is not available in the present
framework. For this reason we adopt the operator-theoretic formulation
above and work primarily with the embedding condition, its
Fourier--Laplace reformulation, and the kernel testing inequality.

\section{Preliminary operator-theoretic consequences}

We conclude by recording several basic operator-theoretic consequences
of the reproducing-kernel and Carleson-measure framework developed in
the preceding sections. Our purpose here is not to give a full theory of
multipliers and weighted composition operators on
$H^n_{2,\rho}(T_\Omega)$, but rather to isolate those structural facts
that follow directly from the reproducing-kernel formalism and that
suggest natural directions for further study.

\subsection{Multipliers}

We begin with multiplication operators, which form the simplest class of
operators naturally associated with the function-space structure of
$H^n_{2,\rho}(T_\Omega)$. Even at this preliminary level, the
reproducing-kernel point of view already yields useful necessary
conditions for boundedness.

\begin{definition}
The \emph{multiplier algebra} of $H^n_{2,\rho}(T_\Omega)$ is defined by
\[
\mathcal M(H^n_{2,\rho}(T_\Omega))
=
\left\{
\psi\in H(T_\Omega):
\psi F\in H^n_{2,\rho}(T_\Omega)
\text{ for all }F\in H^n_{2,\rho}(T_\Omega)
\right\}.
\]
For $\psi\in \mathcal M(H^n_{2,\rho}(T_\Omega))$, the associated
multiplication operator
\[
M_\psi:H^n_{2,\rho}(T_\Omega)\to H^n_{2,\rho}(T_\Omega)
\]
is given by
\[
M_\psi F=\psi F.
\]
\end{definition}

\begin{proposition}\label{prop:multiplier-basic}
Let $\psi\in \mathcal M(H^n_{2,\rho}(T_\Omega))$. Then $M_\psi$ is a
bounded operator on $H^n_{2,\rho}(T_\Omega)$. Moreover, for every
$w\in T_\Omega$,
\[
M_\psi^*K_w^{(n,\rho)}
=
\overline{\psi(w)}\,K_w^{(n,\rho)}.
\]
Consequently,
\[
|\psi(w)|\le \|M_\psi\|,
\qquad w\in T_\Omega,
\]
and in particular
\[
\psi\in H^\infty(T_\Omega)
\qquad\text{with}\qquad
\|\psi\|_{H^\infty(T_\Omega)}\le \|M_\psi\|.
\]
\end{proposition}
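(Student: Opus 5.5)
Boundedness of $M_\psi$ will come from the closed graph theorem. Suppose $F_j\to F$ in $H^n_{2,\rho}(T_\Omega)$ and $\psi F_j\to G$ in $H^n_{2,\rho}(T_\Omega)$. Corollary~\ref{cor:HS-local-uniform} gives locally uniform convergence in both cases, so pointwise on $T_\Omega$ we get $F_j\to F$ and $\psi F_j\to G$. Since $\psi(z)F_j(z)\to\psi(z)F(z)$ for each $z$, it follows that $G=\psi F$. Thus $M_\psi$ is an everywhere-defined linear operator on a Hilbert space with closed graph, and hence it is bounded.

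For the adjoint identity, I will test against an arbitrary $F\in H^n_{2,\rho}(T_\Omega)$ and use the reproducing property from Section~6:
\[
\langle F,M_\psi^*K^{(n,\rho)}_w\rangle
=\langle \psi F,K^{(n,\rho)}_w\rangle
=\psi(w)F(w)
=\langle F,\overline{\psi(w)}K^{(n,\rho)}_w\rangle .
\]
Since $F$ is arbitrary, this gives $M_\psi^*K^{(n,\rho)}_w=\overline{\psi(w)}K^{(n,\rho)}_w$.

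For the pointwise bound, the key input is that $K^{(n,\rho)}_w\neq0$. Its squared norm is $K^{(n,\rho)}(w,w)>0$ by Proposition~\ref{prop:kernel-basic}(5). Taking norms in the eigenvector identity gives
\[
|\psi(w)|\,\|K^{(n,\rho)}_w\|=\|M_\psi^*K^{(n,\rho)}_w\|\le\|M_\psi^*\|\,\|K^{(n,\rho)}_w\|=\|M_\psi\|\,\|K^{(n,\rho)}_w\|,
\]
and dividing by $\|K^{(n,\rho)}_w\|$ yields $|\psi(w)|\le\|M_\psi\|$. Taking the supremum over $w\in T_\Omega$ gives $\psi\in H^\infty(T_\Omega)$ with $\|\psi\|_{H^\infty(T_\Omega)}\le\|M_\psi\|$; $\psi$ is holomorphic by the definition of the multiplier algebra.

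The only step with real content is the closed graph argument, and within it the identification of the graph limit. That identification rests entirely on norm convergence implying pointwise convergence, which Corollary~\ref{cor:HS-local-uniform} supplies. The rest is formal reproducing-kernel manipulation.
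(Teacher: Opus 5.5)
Your proposal is correct and follows the paper's proof essentially step for step. The steps are: the closed graph theorem, with the graph limit identified as $G=\psi F$ via convergence forced by bounded point evaluations; the adjoint identity $M_\psi^*K^{(n,\rho)}_w=\overline{\psi(w)}K^{(n,\rho)}_w$ from the reproducing property; and the bound $|\psi(w)|\le\|M_\psi\|$ obtained by taking norms and dividing by $\|K^{(n,\rho)}_w\|$, which is positive by $\|K^{(n,\rho)}_w\|^2=K^{(n,\rho)}(w,w)>0$ (Proposition~\ref{prop:kernel-basic}(5), a justification the paper leaves implicit).
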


\begin{proof}
By definition, $M_\psi$ is a linear operator from
$H^n_{2,\rho}(T_\Omega)$ into itself with domain equal to the whole
space. We claim that its graph is closed. Indeed, let
\[
F_j\to F,\qquad M_\psi F_j\to G
\]
in $H^n_{2,\rho}(T_\Omega)$. Since point evaluations are bounded on
$H^n_{2,\rho}(T_\Omega)$, both convergences imply local uniform
convergence on $T_\Omega$. Hence
\[
G(z)=\lim_{j\to\infty}(M_\psi F_j)(z)
=
\lim_{j\to\infty}\psi(z)F_j(z)
=
\psi(z)F(z)
\]
for every $z\in T_\Omega$. Thus $G=M_\psi F$, and the graph of $M_\psi$
is closed. By the closed graph theorem, $M_\psi$ is bounded.

Now let $w\in T_\Omega$ and $F\in H^n_{2,\rho}(T_\Omega)$. Using the
reproducing property,
\[
\langle F,M_\psi^*K_w^{(n,\rho)}\rangle
=
\langle M_\psi F,K_w^{(n,\rho)}\rangle
=
(M_\psi F)(w)
=
\psi(w)F(w)
=
\psi(w)\langle F,K_w^{(n,\rho)}\rangle.
\]
Therefore
\[
\langle F,M_\psi^*K_w^{(n,\rho)}\rangle
=
\langle F,\overline{\psi(w)}\,K_w^{(n,\rho)}\rangle
\]
for all $F$, and hence
\[
M_\psi^*K_w^{(n,\rho)}
=
\overline{\psi(w)}\,K_w^{(n,\rho)}.
\]

Taking norms, we obtain
\[
|\psi(w)|\,\|K_w^{(n,\rho)}\|
=
\|M_\psi^*K_w^{(n,\rho)}\|
\le
\|M_\psi\|\,\|K_w^{(n,\rho)}\|.
\]
Since $\|K_w^{(n,\rho)}\|>0$, it follows that
\[
|\psi(w)|\le \|M_\psi\|.
\]
Taking the supremum over $w\in T_\Omega$ gives
\[
\|\psi\|_{H^\infty(T_\Omega)}\le \|M_\psi\|.
\]
\end{proof}

\subsection{Weighted composition operators}

We next consider weighted composition operators, which generalize both
multiplication operators and classical composition operators. The
reproducing-kernel formalism gives an immediate description of their
adjoint action on kernel functions and leads to natural necessary
boundedness conditions.

Let $\varphi:T_\Omega\to T_\Omega$ be a holomorphic self-map and let
$\psi\in H(T_\Omega)$.

\begin{definition}
The \emph{weighted composition operator} associated with $(\psi,\varphi)$
is the formal map
\[
W_{\psi,\varphi}F
=
\psi\cdot(F\circ\varphi),
\qquad F\in H^n_{2,\rho}(T_\Omega),
\]
whenever this expression defines an element of
$H^n_{2,\rho}(T_\Omega)$.
\end{definition}

\begin{proposition}\label{prop:wco-adjoint}
Let $W_{\psi,\varphi}$ be a bounded weighted composition operator on
$H^n_{2,\rho}(T_\Omega)$. Then for every $w\in T_\Omega$,
\[
W_{\psi,\varphi}^*K_w^{(n,\rho)}
=
\overline{\psi(w)}\,K_{\varphi(w)}^{(n,\rho)}.
\]
\end{proposition}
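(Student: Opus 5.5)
The plan is to test the adjoint against an arbitrary $F\in H^n_{2,\rho}(T_\Omega)$ and use the reproducing property twice, exactly as in the proof of Proposition~\ref{prop:multiplier-basic}. Since $W_{\psi,\varphi}$ is assumed bounded on $H^n_{2,\rho}(T_\Omega)$, its Hilbert-space adjoint $W_{\psi,\varphi}^*$ exists and is bounded. Because $\varphi$ maps $T_\Omega$ into itself, $\varphi(w)\in T_\Omega$. Hence the kernel $K_{\varphi(w)}^{(n,\rho)}$ is a well-defined element of $H^n_{2,\rho}(T_\Omega)$ by Theorem~\ref{thm:kernel-formula}.

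Fix $w\in T_\Omega$ and $F\in H^n_{2,\rho}(T_\Omega)$. The key chain of identities is
\[
\langle F,W_{\psi,\varphi}^*K_w^{(n,\rho)}\rangle
=\langle W_{\psi,\varphi}F,K_w^{(n,\rho)}\rangle
=(W_{\psi,\varphi}F)(w)
=\psi(w)F(\varphi(w))
=\psi(w)\langle F,K_{\varphi(w)}^{(n,\rho)}\rangle .
\]
The second equality uses that $W_{\psi,\varphi}F\in H^n_{2,\rho}(T_\Omega)$, which is part of the boundedness hypothesis, together with the reproducing property at $w$. The third equality is the pointwise definition of $W_{\psi,\varphi}$. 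The last equality is the reproducing property at the point $\varphi(w)$.

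Moving the scalar into the second slot of the inner product gives
\[
\langle F,W_{\psi,\varphi}^*K_w^{(n,\rho)}\rangle=\langle F,\overline{\psi(w)}K_{\varphi(w)}^{(n,\rho)}\rangle
\]
for all $F$. Since $F$ is arbitrary, the two vectors coincide, which is the assertion.

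There is no real obstacle. The only point needing care is the interpretation of the hypothesis: boundedness must mean that $W_{\psi,\varphi}F$ lies in the space for every $F$, so that the reproducing property may be applied to it. It must also mean that the operator acts pointwise as $\psi\cdot(F\circ\varphi)$, so that evaluation at $w$ is legitimate.
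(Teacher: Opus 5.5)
Your proposal is correct and follows the paper's proof essentially step for step. Like the paper, you pair $W_{\psi,\varphi}^*K_w^{(n,\rho)}$ with an arbitrary $F$, apply the reproducing property at $w$ and then at $\varphi(w)$, and conclude because $F$ is arbitrary.
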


\begin{proof}
Let $F\in H^n_{2,\rho}(T_\Omega)$. Then
\[
\langle F,W_{\psi,\varphi}^*K_w^{(n,\rho)}\rangle
=
\langle W_{\psi,\varphi}F,K_w^{(n,\rho)}\rangle
=
(W_{\psi,\varphi}F)(w).
\]
By definition of $W_{\psi,\varphi}$,
\[
(W_{\psi,\varphi}F)(w)
=
\psi(w)\,F(\varphi(w)).
\]
Using the reproducing property again,
\[
F(\varphi(w))
=
\langle F,K_{\varphi(w)}^{(n,\rho)}\rangle.
\]
Hence
\[
\langle F,W_{\psi,\varphi}^*K_w^{(n,\rho)}\rangle
=
\psi(w)\,\langle F,K_{\varphi(w)}^{(n,\rho)}\rangle
=
\langle F,\overline{\psi(w)}\,K_{\varphi(w)}^{(n,\rho)}\rangle.
\]
Since this identity holds for all $F\in H^n_{2,\rho}(T_\Omega)$, the
result follows.
\end{proof}

\subsection{Kernel necessary conditions}

The adjoint action on reproducing kernels immediately yields natural
necessary conditions for boundedness. These conditions are the most
robust operator-theoretic consequences available at the present stage.

\begin{corollary}[Kernel necessary condition]\label{cor:wco-kernel-necessary}
Let $W_{\psi,\varphi}$ be a bounded weighted composition operator on
$H^n_{2,\rho}(T_\Omega)$. Then
\[
|\psi(w)|^2\,K^{(n,\rho)}(\varphi(w),\varphi(w))
\le
\|W_{\psi,\varphi}\|^2\,K^{(n,\rho)}(w,w)
\]
for every $w\in T_\Omega$.
\end{corollary}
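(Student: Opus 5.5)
The plan is to feed the kernel function $K_w^{(n,\rho)}$ into the adjoint identity of Proposition~\ref{prop:wco-adjoint} and compare norms on the two sides. Since $W_{\psi,\varphi}$ is bounded on $H^n_{2,\rho}(T_\Omega)$, its Hilbert-space adjoint $W_{\psi,\varphi}^*$ exists and satisfies $\|W_{\psi,\varphi}^*\|=\|W_{\psi,\varphi}\|$. For fixed $w\in T_\Omega$, Proposition~\ref{prop:wco-adjoint} gives
\[
W_{\psi,\varphi}^*K_w^{(n,\rho)}=\overline{\psi(w)}\,K_{\varphi(w)}^{(n,\rho)}.
\]
Taking norms then yields
\[
|\psi(w)|\,\|K_{\varphi(w)}^{(n,\rho)}\|
\le
\|W_{\psi,\varphi}\|\,\|K_w^{(n,\rho)}\|.
\]

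Next, I will convert kernel norms into diagonal values. The reproducing property gives
\[
\|K_v^{(n,\rho)}\|^2=\langle K_v^{(n,\rho)},K_v^{(n,\rho)}\rangle=K_v^{(n,\rho)}(v)=K^{(n,\rho)}(v,v)
\]
for any $v\in T_\Omega$. This identity was already used in the proof of Proposition~\ref{prop:kernel-test}. I apply it at $v=w$ and at $v=\varphi(w)$. The latter is legitimate because $\varphi$ maps $T_\Omega$ into $T_\Omega$, so $K_{\varphi(w)}^{(n,\rho)}$ is well defined. Squaring the norm inequality above then gives exactly the asserted bound
\[
|\psi(w)|^2K^{(n,\rho)}(\varphi(w),\varphi(w))\le\|W_{\psi,\varphi}\|^2K^{(n,\rho)}(w,w).
\]

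There is no real obstacle here. The only points needing care are the following. First, $\varphi(w)$ must lie in $T_\Omega$ for the reproducing identity to apply, and this holds by hypothesis. Second, the norm of the adjoint must equal that of the operator, which is standard for bounded operators on Hilbert spaces. Positivity of $K^{(n,\rho)}(v,v)$ from Proposition~\ref{prop:kernel-basic} is not needed, since we never divide; it only shows the inequality is nontrivial.
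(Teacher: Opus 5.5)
Your proposal is correct and follows essentially the same route as the paper. Both apply the adjoint identity of Proposition~\ref{prop:wco-adjoint} to $K_w^{(n,\rho)}$, bound the norm using $\|W_{\psi,\varphi}^*\|=\|W_{\psi,\varphi}\|$, and rewrite the kernel norms as diagonal values via $\|K_v^{(n,\rho)}\|^2=K^{(n,\rho)}(v,v)$.
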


\begin{proof}
By Proposition~\ref{prop:wco-adjoint},
\[
W_{\psi,\varphi}^*K_w^{(n,\rho)}
=
\overline{\psi(w)}\,K_{\varphi(w)}^{(n,\rho)}.
\]
Taking norms and using the reproducing-kernel identity
\[
\|K_w^{(n,\rho)}\|^2=K^{(n,\rho)}(w,w),
\]
we obtain
\[
\begin{aligned}
|\psi(w)|^2\,K^{(n,\rho)}(\varphi(w),\varphi(w))
&=
\|W_{\psi,\varphi}^*K_w^{(n,\rho)}\|^2 \\
&\le
\|W_{\psi,\varphi}\|^2\,\|K_w^{(n,\rho)}\|^2 \\
&=
\|W_{\psi,\varphi}\|^2\,K^{(n,\rho)}(w,w).
\end{aligned}
\]
This proves the claim.
\end{proof}

The preceding results show that the reproducing-kernel and
Carleson-measure methods developed in the earlier sections provide a
natural framework for the operator theory of
$H^n_{2,\rho}(T_\Omega)$. In particular, kernel estimates yield
necessary conditions for the boundedness of weighted composition
operators and pointwise control for multipliers. A systematic treatment
of multiplier algebras and weighted composition operators, including
sharp boundedness and compactness criteria, is not pursued in the
present paper and will be studied elsewhere.

\section*{Acknowledgements}
	This work was supported by the NSF of Guangdong Province, China (Grant No. 2025A1515011213) and the Science and Technology Development Fund of Macau SAR (No. 0020/2023/RIB1) . 

\bibliographystyle{plain}
\bibliography{HS}

\end{document}